\newcommand{\vertiii}[1]{{\left\vert\kern-0.25ex\left\vert\kern-0.25ex\left\vert #1\right\vert\kern-0.25ex\right\vert\kern-0.25ex\right\vert}}
\begin{document}

\title{Robust and Efficient Modular Grad-Div Stabilization}
\author{J. A. Fiordilino\thanks{University of Pittsburgh, Department of Mathematics, Pittsburgh, PA 15260.  The research presented herein was partially supported by NSF grants CBET 1609120 and DMS 1522267.  Moreover, J.A.F. would like to acknowledge support from the DoD SMART Scholarship.} \and W. Layton\footnotemark[1] \and Y. Rong\footnotemark[1] \thanks{Xi'an Jiaotong University, School of Mathematics and Statistics, Xi'an, Shaanxi 710049, China.  Support from NSFC grants 11171269 and 11571274 and China Scholarship Council grant 201606280154.}}
\maketitle

\begin{abstract}
	This paper presents two modular grad-div algorithms for calculating solutions to the Navier-Stokes equations (NSE).  These algorithms add to an NSE code a minimally intrusive module that implements grad-div stabilization.  The algorithms do not suffer from either breakdown (locking) or debilitating slow down for large values of grad-div parameters.  Stability and optimal-order convergence of the methods are proven.  Numerical tests confirm the theory and illustrate the benefits of these algorithms over a fully coupled grad-div stabilization.
\end{abstract}
\section{Introduction}
Grad-div stabilization of fluid flow problems has drawn attention due to its positive impact on solution quality.  However, it also introduces new computational challenges.  As the grad-div parameter $\gamma$ increases, the condition number of the resulting linear system grows without bound \cite{Glowinski}; consequently, iterative solvers can slow dramatically.  Unfortunately, appropriate values for $\gamma$ can vary wildly depending on the application; proposed values include: $\mathcal{O}(h)$ \cite{Jenkins}, $\mathcal{O}(\nu)$ \cite{Roos}, and both local and global solution ratios $\mathcal{O}(\frac{|p|_{m}}{|v|_{k+1}})$ \cite{Jenkins}, among others \cite{Decaria,John2}; values as high as $\gamma = 1,000$ and $10,000$ produce good results for Rayleigh-B\'{e}nard convection for silicon oil, \cite{Jenkins}.  Therefore, moderate or even large values of $\gamma$ may be unavoidable.  Moreover, grad-div stabilization increases coupling, decreases sparsity, and makes preconditioning more difficult.  Research has addressed the former \cite{Benzi,Boerm,Heister,LeBorne,Olshanskii,Niet,Olshanskii2} and the latter \cite{Bowers,Guermond,Linke,Schoberl}, but full resolution is still open.

This paper presents two modular grad-div stabilizations resolving both issues, Algorithms 1 and 2 in Section 3.  Algorithm 2 incorporates sparse grad-div ideas from \cite{Guermond} resulting in further storage reduction and efficiency gains.  Each method adds a minimally intrusive stabilization module.  The algorithms are simple to implement, retain the benefits of grad-div stabilization and are resilient to breakdown as stabilization parameters increase, Figure \ref{figure=speedtest} below.

\begin{figure}
	\centering
	\includegraphics[height=2.5in, keepaspectratio]{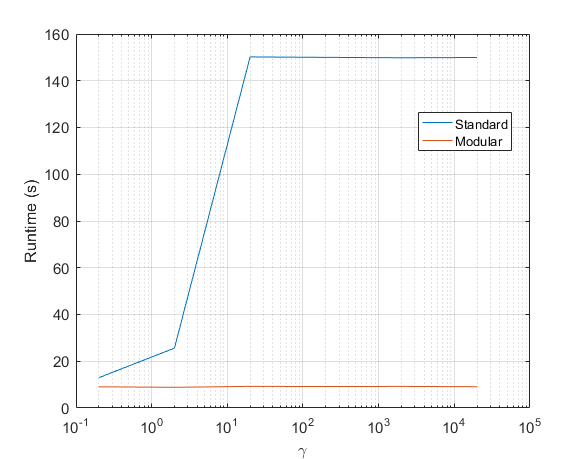}
	\caption{Runtimes: Standard vs. modular grad-div for increasing $\gamma$.}\label{figure=speedtest}
\end{figure}
Let $\Omega \subset \mathbb{R}^{d}$ (d = 2,3) be a convex polyhedral domain with piecewise smooth boundary $\partial \Omega$.  Given the fluid viscosity $\nu$, $u(x,0) = u^{0}(x)$, and the body force $f(x,t)$, the velocity $u(x,t):\Omega \times (0,t^{\ast}] \rightarrow \mathbb{R}^{d}$ and pressure $p(x,t):\Omega \times (0,t^{\ast}] \rightarrow \mathbb{R}$ satisfy
\begin{align}
u_{t} + u \cdot \nabla u - \nu \Delta u + \nabla p = f \; \; in \; \Omega, \notag
\\ \nabla \cdot u = 0 \; \; in \; \Omega, \label{s1}
\\ u = 0 \; \; on \; \partial \Omega. \notag
\end{align}

To explain how grad-div stabilization terms $-\beta \nabla \nabla \cdot u_{t}$ and $-\gamma \nabla \nabla \cdot u$ ($\beta \geq 0, \; \gamma \geq 0$) are introduced, suppress the spatial discretization momentarily and consider the following simple example:
\\ \underline{Algorithm 1}
\\ \textbf{Step 1:} Given $u^{n}$, find $\hat{u}^{n+1}$ and $p^{n+1}$ satisfying
\begin{align} \label{d1}
\frac{\hat{u}^{n+1} - u^{n}}{\Delta t} + u^{n} \cdot \nabla \hat{u}^{n+1} - \nu \Delta \hat{u}^{n+1} + \nabla p^{n+1} = f^{n+1} \; and \; \nabla \cdot \hat{u}^{n+1} = 0.
\end{align}
\textbf{Step 2:} Given $\hat{u}^{n+1}$, find $u^{n+1}$ satisfying
\begin{align} 
u^{n+1} - (\beta + \gamma \Delta t) \nabla \nabla \cdot u^{n+1} = \hat{u}^{n+1} - \beta \nabla \nabla \cdot u^{n}. \label{d2}
\end{align}
\noindent Step 1 is obviously a consistent discretization of the NSE.  Step 2 can be rewritten as
\begin{align} \label{d2rewrite}
\frac{\hat{u}^{n+1} - u^{n+1}}{\Delta t} = - \beta \nabla \nabla \cdot \big(\frac{u^{n+1} - u^{n}}{\Delta t}\big) - \gamma \nabla \nabla \cdot u^{n+1}.
\end{align}
Rewriting the first term in Step 1 as $\frac{u^{n+1} - u^{n}}{\Delta t} + \frac{\hat{u}^{n+1} - u^{n+1}}{\Delta t}$ and using (\ref{d2rewrite}), we see that Steps 1 and 2 introduce the bold terms below:
\begin{align*}
\frac{u^{n+1}-u^{n}}{\Delta t} \bm{-\beta \nabla \nabla \cdot \big(\frac{u^{n+1} - u^{n}}{\Delta t}\big)} + u^{n} \cdot \nabla \hat{u}^{n+1} - \nu \Delta \hat{u}^{n+1} \bm{- \gamma \nabla \nabla \cdot u^{n+1}} = f^{n+1}.
\end{align*}
After spatial discretization, Step 1 requires solution of a standard and well understood velocity-pressure system without the added coupling or ill conditioning of the grad-div terms while Step 2 is the same uncoupled SPD grad-div system at every timestep.  Figure \ref{figure=speedtest}, summarizing a timing test in Section 5, shows this separation into two simpler systems does result in a large efficiency increase; in this example, a speed up of $\simeq 30$ plus greater robustness (the coupled solve fails around $\gamma = 20$).

Step 2 can be combined with both sparse grad-div (from \cite{Bowers,Linke}) and lagged grad-div (from \cite{Guermond}), when $\beta \equiv 0$, for even greater efficiency.  For lagged grad-div, we replace Step 2 above:
\\ \underline{Algorithm 2}
\\ \textbf{Step 1:} Same as Step 1 in Algorithm 1.
\\ \textbf{Step 2:} Given $\hat{u}^{n+1}$, find $u^{n+1}=(u^{n+1}_{1},u^{n+1}_{2},u^{n+1}_{3})^{T}$ satisfying
\begin{align} 
u^{n+1}_{1} - \gamma \Delta t \frac{\partial^{2} u^{n+1}_{1}}{\partial x^{2}} = \hat{u}^{n+1}_{1} + \gamma \Delta t \big( \frac{\partial^{2} u^{n}_{2}}{\partial x \partial y} + \frac{\partial^{2} u^{n}_{3}}{\partial x \partial z}\big), \notag \\
u^{n+1}_{2} - \gamma \Delta t \frac{\partial^{2} u^{n+1}_{2}}{\partial y^{2}} = \hat{u}^{n+1}_{2} + \gamma \Delta t \big(\frac{\partial^{2} u^{n}_{1}}{\partial x \partial y} + \frac{\partial^{2} u^{n}_{3}}{\partial y \partial z}\big),\label{d2b} \\
u^{n+1}_{3} - \gamma \Delta t \frac{\partial^{2} u^{n+1}_{3}}{\partial z^{2}} = \hat{u}^{n+1}_{3} + \gamma \Delta t \big(\frac{\partial^{2} u^{n}_{1}}{\partial x \partial z} + \frac{\partial^{2} u^{n}_{2}}{\partial y \partial z}\big). \notag
\end{align}
Lagging the cross-terms, further uncouples the velocity components in Step 2.  Consequently, the second step, (\ref{d2b}), is solving one linear system for each component.  On a structured mesh, it can even reduce to one tridiagonal solve per meshline.

Naturally, Algorithms 1 and 2, however efficient, are only useful if they are reliable.  We prove in Section 4 that they are unconditionally, nonlinearly, energy stable.  This analysis delineates the effect of the algorithmic uncoupling on the methods consistency error, numerical dissipation and convergence properties and thereby establishes full reliability.  Our analysis is necessarily technical and thus treats the simplest (first-order) discretization (\ref{d1}) and (\ref{d2rewrite}).  Extension of both the algorithm and the analysis to some higher order methods is understudy.

In Section 2, we collect necessary mathematical tools.  In Section 3, we present fully-discrete algorithms based on (\ref{d1}) - (\ref{d2}) and a variation with (\ref{d2}) replaced by an alternative ``lagged" version of grad-div stabilization.  Stability and error analysis follow in Section 4.  In particular, unconditional, nonlinear, energy, stability of the algorithms are proven in Theorems \ref{t1} and \ref{t2} and first-order convergence in Theorems \ref{t3} and \ref{t4}.  We end with numerical experiments, which illustrate the effectiveness of these algorithms, and conclusions in Sections 5 and 6.
\subsection{Options for Step 2}
For clarity, set $\beta = 0$.  Step 2 then requires solution of the linear system arising from the variational formulation of
\begin{align}\label{illustrate}
\big(I - \gamma \Delta t \nabla \nabla \cdot \big)u = \hat{u},
\end{align}
in a velocity finite element space.  The coefficient matrix is SPD, unchanged except by a shift when the timestep is altered.  Thus, in addition to the direct methods used in our tests, good alternatives include multigrid \cite{Arnold,Hiptmair} and efficient Krylov subspace methods, e.g., \cite{Eshof,Simoncini}.  Solution via gradient flow may also suffice.  For this, a pseudotimestep $\tau$ is selected and the iteration proceeds via
\begin{align}
\frac{u_{l+1}-u_{l}}{\tau} + \frac{1}{\Delta t}u_{l+1} - \gamma \nabla \nabla \cdot u_{l} = \hat{u},
\end{align}
until satisfied.  Analysis of these (and other) options is an open problem.
\\ \noindent \textbf{Remark:}  The system (\ref{illustrate}) can be written as
\begin{align*}
\frac{u-\hat{u}}{\Delta t} - \gamma \nabla \nabla \cdot u = 0,
\end{align*}
which makes it clear that modular implementation is equivalent to operator splitting, an observation of Olshanskii and Xiong \cite{Olshanskii3} in a different context.
\section{Mathematical Preliminaries}
The $L^{2} (\Omega)$ inner product is $(\cdot , \cdot)$ and the induced norm is $\| \cdot \|$.  Define the Hilbert spaces,
\begin{align*}
X &:= H^{1}_{0}(\Omega)^{d} = \{ v \in H^{1}(\Omega)^d : v = 0 \; on \; \partial \Omega \}, \;
Q := L^{2}_{0}(\Omega) = \{ q \in L^{2}(\Omega) : (1,q) = 0 \}, \\
V &:= \{ v \in X : (q,\nabla \cdot v) = 0 \; \forall \; q \in Q \}.
\end{align*}
The explicitly skew-symmetric trilinear form is denoted:
\begin{align*}
b(u,v,w) &= \frac{1}{2} (u \cdot \nabla v, w) - \frac{1}{2} (u \cdot \nabla w, v) \; \; \; \forall u,v,w \in X.
\end{align*}
\noindent It enjoys the following properties.
\begin{lemma} \label{l1}
There exists $C_{1}$ and $C_{2}$ such that for all u,v,w $\in$ X, $b(u,v,w)$ satisfies
\begin{align*}
b(u,v,w) &= (u \cdot \nabla v, w) + \frac{1}{2} ((\nabla \cdot u)v, w), \\
b(u,v,w) &\leq C_{1} \| \nabla u \| \| \nabla v \| \| \nabla w \|, \\
b(u,v,w) &\leq C_{2}  \sqrt{\| u \| \|\nabla u\|} \| \nabla v \| \| \nabla w \|.
\end{align*}
Moreover, if $v \in H^{2}(\Omega)$, then there exists $C_{3}$ such that
\begin{align*}
b(u,v,w) &\leq C_{3} \Big( \| u \| \| v \|_{2} \| \nabla w \| + \| \nabla \cdot u \| \| \nabla v \| \| \nabla w \| \Big).
\end{align*}
Further, if $v \in H^{2}(\Omega) \cap L^{\infty}(\Omega)$, then
\begin{align*}
	b(u,v,w) &\leq \big(C_{3} \|v\|_{2} + \|v\|_{\infty}\big)\| u \| \| \nabla w \|.
\end{align*}
\begin{proof}
The identity is a calculation.  The first and second inequalities are standard.  The third inequality follows from the identity, Lemma 2.2 (g) p. 2044 in \cite{Layton2} on the convective term and the H\"{o}lder, Ladyzhenskaya, and Poincar\'{e}-Friedrichs inequalities on the second term.  The fourth follows from Lemma 2.2 (g) p. 2044 in \cite{Layton2} on the first term and H\"{o}lder's inequality with $p=q=2$ and $r=\infty$ on the second term.
\end{proof}
\end{lemma}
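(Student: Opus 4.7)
The plan is to dispatch the identity and the four inequalities in sequence; each reduces to the definition of $b$ combined with H\"older inequalities and the Sobolev-type embeddings available on the bounded, regular domain $\Omega$.

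First I would establish the identity. Starting from $b(u,v,w)=\tfrac12(u\cdot\nabla v,w)-\tfrac12(u\cdot\nabla w,v)$ I would integrate the second summand by parts; the boundary contribution vanishes because $w\in H^1_0$, yielding $(u\cdot\nabla w,v)=-(u\cdot\nabla v,w)-((\nabla\cdot u)v,w)$, and the identity follows by substitution.

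The $C_1$ inequality follows by splitting each of the two terms in the definition of $b$ via H\"older $L^4\cdot L^2\cdot L^4$ and invoking the Sobolev embedding $H^1_0(\Omega)\hookrightarrow L^4(\Omega)$ (valid for $d\le 4$) to control the $L^4$ factors by gradient norms. The $C_2$ inequality is identical in structure, except that one of the two $L^4$ factors is instead controlled by the sharper Ladyzhenskaya inequality $\|u\|_{L^4}\le C\|u\|^{1/2}\|\nabla u\|^{1/2}$, while the remaining $L^4$ factor is handled by Sobolev and Poincar\'e-Friedrichs. For the $H^2$ inequality I would switch to the identity form just proved: the convective piece $(u\cdot\nabla v,w)$ is bounded by the cited estimate from \cite{Layton2}, which exploits $v\in H^2$ to trade an $L^4$ factor on $u$ for an $L^2$ one, giving $C\|u\|\|v\|_2\|\nabla w\|$, while the divergence piece $\tfrac12((\nabla\cdot u)v,w)$ is handled by H\"older in $L^2\cdot L^4\cdot L^4$ together with Sobolev and Poincar\'e-Friedrichs, producing $\|\nabla\cdot u\|\|\nabla v\|\|\nabla w\|$. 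For the final $L^\infty$ inequality I would return to the definition: the $(u\cdot\nabla v,w)$ term is again bounded by the Layton estimate, and $(u\cdot\nabla w,v)$ by H\"older with exponents $(2,2,\infty)$, giving $\|u\|\|\nabla w\|\|v\|_\infty$ and hence the stated bound after summing.

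The only genuine obstacle is recalling and correctly invoking the refined convective estimate of \cite{Layton2} in the two $H^2$ bounds; once that tool is in hand, each inequality reduces to a routine choice of H\"older exponents and embeddings.
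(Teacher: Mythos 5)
Your route is the same as the paper's: the identity by integration by parts (boundary terms vanish since $u,v,w\in H^{1}_{0}$), the first two bounds by standard H\"{o}lder/embedding arguments, and the last two by applying Lemma 2.2(g) of \cite{Layton2} to the convective term while handling the remaining term with H\"{o}lder ($2$--$4$--$4$ plus Ladyzhenskaya/Poincar\'{e}--Friedrichs for the $H^{2}$ bound, $2$--$2$--$\infty$ for the $L^{\infty}$ bound). Your treatment of the identity, the $C_{1}$ bound, the $C_{3}$ bound, and the final bound matches the paper and is fine.

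The one step that does not survive as written is your derivation of the $C_{2}$ bound when $d=3$. The Ladyzhenskaya inequality you invoke, $\|u\|_{L^{4}}\leq C\|u\|^{1/2}\|\nabla u\|^{1/2}$, is the two-dimensional version; in three dimensions the exponents are $\|u\|_{L^{4}}\leq C\|u\|^{1/4}\|\nabla u\|^{3/4}$, which produces the factor $\|u\|^{1/4}\|\nabla u\|^{3/4}$ rather than $\sqrt{\|u\|\|\nabla u\|}$, and this cannot be upgraded to the stated bound since $\|\nabla u\|\leq C\|u\|$ fails in general. Because the paper allows $d=2,3$, the standard fix is to split each term of $b$ as $L^{3}\cdot L^{2}\cdot L^{6}$ and use the interpolation $\|u\|_{L^{3}}\leq C\|u\|^{1/2}\|\nabla u\|^{1/2}$ (valid for $d\leq 3$, in $d=2$ after an application of Poincar\'{e}--Friedrichs) together with $H^{1}_{0}(\Omega)\hookrightarrow L^{6}(\Omega)$; this yields $b(u,v,w)\leq C_{2}\sqrt{\|u\|\|\nabla u\|}\,\|\nabla v\|\|\nabla w\|$ in both dimensions. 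With that substitution your argument is complete and coincides with the paper's (largely cited) proof.
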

We define the following ``lagged" grad-div operator, which will simplify the analysis, corrresponding to \cite{Guermond}:
\begin{definition}  Let $gd_{lag}: X \times X \rightarrow \mathbb{R}$ be the lagged grad-div operator given by
	\begin{equation}\label{gd1}
	gd_{lag}(u^{n},v) = 
	\begin{cases}
	(u^{n}_{1,1},v_{1,1}) + (u^{n-1}_{2,2},v_{1,1}) + (u^{n-1}_{1,1},v_{2,2}) + (u^{n}_{2,2},v_{2,2})  & d = 2, \\
	(u^{n}_{1,1} + u^{n-1}_{2,2} + u^{n-1}_{3,3},v_{1,1}) + (u^{n-1}_{1,1} + u^{n}_{2,2} + u^{n-1}_{3,3},v_{2,2})
	\\ + (u^{n-1}_{1,1} + u^{n-1}_{2,2}+ u^{n}_{3,3},v_{3,3}) & d = 3.
	\end{cases}
	\end{equation}
\end{definition}
\indent We prove stability and convergence of lagged modular grad-div in 2d.  The analysis in 3d is an open problem.
\begin{lemma} \label{l2}
	Let $d=2$.  The following identities hold.
	\begin{align} 
	gd_{lag}(u^{n},u^{n}) = \frac{1}{2} \big\{\|u^{n}_{1,1}\|^{2} - \|u^{n-1}_{1,1}\|^{2} + \|u^{n}_{2,2}\|^{2} - \|u^{n-1}_{2,2}\|^{2}\big\} + \frac{1}{2} \big\{\|u^{n}_{1,1} + u^{n-1}_{2,2}\|^{2} + \|u^{n-1}_{1,1} + u^{n}_{2,2}\|^{2}\big\}, \notag
	\\ gd_{lag}(u^{n},u^{n})-(\nabla \cdot u^{n-1},\nabla \cdot u^{n}) = \frac{1}{2} \big\{\|u^{n}_{1,1}\|^{2} - \|u^{n-1}_{1,1}\|^{2} + \|u^{n}_{2,2}\|^{2} - \|u^{n-1}_{2,2}\|^{2}\big\} \label{gdid1}
	\\ + \frac{1}{2} \big\{\|u^{n}_{1,1} - u^{n-1}_{1,1}\|^{2} + \|u^{n}_{2,2} - u^{n-1}_{2,2}\|^{2}\big\}. \notag
	\end{align}
	\begin{proof}
		Consider (\ref{gd1}) and let $v = u^{n}$.  Then,
		\begin{align} \label{maingd}
		gd_{lag}(u^{n},u^{n}) &= \|u^{n}_{1,1}\|^{2} + \|u^{n}_{2,2}\|^{2} + (u^{n-1}_{2,2},u^{n}_{1,1}) + (u^{n-1}_{1,1},u^{n}_{2,2}).
		\end{align}
		The polarization identity applied to each of the mixed terms $(u^{n-1}_{2,2},u^{n}_{1,1})$ and $(u^{n-1}_{1,1},u^{n}_{2,2})$ yields
		\begin{align*}
		(u^{n-1}_{2,2},u^{n}_{1,1}) &= -\frac{1}{2}\big\{\|u^{n-1}_{2,2}\|^{2} + \|u^{n}_{1,1}\|^{2} - \|u^{n}_{1,1} + u^{n-1}_{2,2}\|^{2}\big\},
		\\ (u^{n-1}_{1,1},u^{n}_{2,2}) &= -\frac{1}{2}\big\{\|u^{n-1}_{1,1}\|^{2} + \|u^{n}_{2,2}\|^{2} - \|u^{n}_{2,2} + u^{n-1}_{1,1}\|^{2}\big\}.
		\end{align*}
		Using the above identities in equation (\ref{maingd}) yields the first result (\ref{gdid1}) after a rearrangement.  
		\\ \indent For the second identity, expand $(\nabla \cdot u^{n-1}, \nabla \cdot u^{n})$ and subtract it from (\ref{maingd}).  This leads to
		\begin{align*}
		gd_{lag}(u^{n},u^{n})-(\nabla \cdot u^{n-1}, \nabla \cdot u^{n}) = (u^{n}_{1,1}-u^{n-1}_{1,1},u^{n}_{1,1}) + (u^{n}_{2,2}-u^{n-1}_{2,2},u^{n}_{2,2}).
		\end{align*}
		Applying the polarization identity to each term on the r.h.s. yields the result.
	\end{proof}
\end{lemma}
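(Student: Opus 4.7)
The plan is a direct calculation with two ingredients only: substitution of $v=u^{n}$ into the 2d definition (\ref{gd1}), and the two forms of the polarization identity. No spatial integration by parts or trace arguments are needed.

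First I would set $v=u^{n}$ in (\ref{gd1}) to obtain the base expression
\[
gd_{lag}(u^{n},u^{n}) = \|u^{n}_{1,1}\|^{2} + \|u^{n}_{2,2}\|^{2} + (u^{n-1}_{2,2},u^{n}_{1,1}) + (u^{n-1}_{1,1},u^{n}_{2,2}),
\]
which exposes two mixed inner products between adjacent time levels and two ``diagonal'' square terms.

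For the first identity I would apply the polarization identity $(a,b)=\tfrac{1}{2}\bigl(\|a+b\|^{2}-\|a\|^{2}-\|b\|^{2}\bigr)$ to each of the two mixed terms, with $(a,b)=(u^{n-1}_{2,2},u^{n}_{1,1})$ and then $(a,b)=(u^{n-1}_{1,1},u^{n}_{2,2})$. This replaces each cross term by a squared sum (the pieces that ultimately form $\tfrac{1}{2}\{\|u^{n}_{1,1}+u^{n-1}_{2,2}\|^{2}+\|u^{n-1}_{1,1}+u^{n}_{2,2}\|^{2}\}$) minus halves of $\|u^{n}_{1,1}\|^{2}, \|u^{n}_{2,2}\|^{2}, \|u^{n-1}_{1,1}\|^{2}, \|u^{n-1}_{2,2}\|^{2}$. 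Combining the latter with the two full squares already present in the base expression produces the telescoping ``$\tfrac{1}{2}\{\|u^{n}_{i,i}\|^{2}-\|u^{n-1}_{i,i}\|^{2}\}$'' pattern, which is exactly the first claim after rearrangement.

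For the second identity I would instead expand
\[
(\nabla\cdot u^{n-1},\nabla\cdot u^{n}) = (u^{n-1}_{1,1},u^{n}_{1,1}) + (u^{n-1}_{1,1},u^{n}_{2,2}) + (u^{n-1}_{2,2},u^{n}_{1,1}) + (u^{n-1}_{2,2},u^{n}_{2,2}),
\]
and subtract from the base expression. The two ``off-diagonal'' terms cancel exactly the mixed terms appearing in $gd_{lag}(u^{n},u^{n})$, collapsing the difference to $(u^{n}_{1,1}-u^{n-1}_{1,1},u^{n}_{1,1})+(u^{n}_{2,2}-u^{n-1}_{2,2},u^{n}_{2,2})$. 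Then the alternate polarization identity $(a-b,a)=\tfrac{1}{2}\bigl(\|a\|^{2}-\|b\|^{2}+\|a-b\|^{2}\bigr)$ applied term by term produces the stated result.

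I do not expect a genuine obstacle here; the calculation is essentially bookkeeping. The only point requiring care is choosing the correct polarization form in each case — the $\|a+b\|^{2}$ version for the first identity and the $\|a-b\|^{2}$ version for the second — and noting that subtracting $(\nabla\cdot u^{n-1},\nabla\cdot u^{n})$ is precisely what converts the ``$+$'' structure of identity one into the ``$-$'' structure of identity two by killing the mixed terms.
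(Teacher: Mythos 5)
Your proposal is correct and follows essentially the same route as the paper: substitute $v=u^{n}$ into (\ref{gd1}), polarize the two mixed terms to obtain the first identity, then expand $(\nabla\cdot u^{n-1},\nabla\cdot u^{n})$, cancel the cross terms, and polarize the remaining difference terms for the second. The only cosmetic difference is that you state the polarization identities in the $(a,b)$ and $(a-b,a)$ forms while the paper writes them with the opposite sign convention, which changes nothing in the bookkeeping.
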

The discrete time analysis will utilize the following norms $\forall \; -1 \leq k < \infty$:
\begin{align*}
\vertiii{v}_{\infty,k} &:= \max_{0\leq n \leq N} \| v^{n} \|_{k}, \;
\vertiii{v}_{p,k} := \big(\Delta t \sum^{N}_{n = 0} \| v^{n} \|^{p}_{k}\big)^{1/p}.
\end{align*}
The weak formulation of system (\ref{s1}) is:
Find $u:[0,t^{\ast}] \rightarrow X$ and $p:[0,t^{\ast}] \rightarrow Q$ for a.e. $t \in (0,t^{\ast}]$ satisfying
\begin{align}
(u_{t},v) + b(u_,u,v) + \nu (\nabla u,\nabla v) - (p, \nabla \cdot v) &= (f,v) \; \; \forall v \in X, \\
(q, \nabla \cdot u) &= 0 \; \; \forall q \in Q.
\end{align}
\subsection{Finite Element Preliminaries}
Consider a quasi-uniform mesh $\Omega_{h} = \{K\}$ of $\Omega$ with maximum triangle diameter length $h$.  Let $X_{h} \subset X$ and $Q_{h} \subset Q$ be conforming finite element spaces consisting of continuous piecewise polynomials of degrees \textit{j} and \textit{l}, respectively.  Moreover, assume they satisfy the following approximation properties $\forall 1 \leq j,l \leq k,m$:
\begin{align}
\inf_{v_{h} \in X_{h}} \Big\{ \| u - v_{h} \| + h\| \nabla (u - v_{h}) \| \Big\} &\leq Ch^{k+1} \lvert u \rvert_{k+1}, \label{a1}\\
\inf_{q_{h} \in Q_{h}}  \| p - q_{h} \| &\leq Ch^{m} \lvert p \rvert_{m}, \label{a2}
\end{align}
for all $u \in X \cap H^{k+1}(\Omega)^{d}$ and $p \in Q \cap H^{m}(\Omega)$.  Furthermore, we consider those spaces for which the discrete inf-sup condition is satisfied,
\begin{equation} \label{infsup} 
\inf_{q_{h} \in Q_{h}} \sup_{v_{h} \in X_{h}} \frac{(q_{h}, \nabla \cdot v_{h})}{\| q_{h} \| \| \nabla v_{h} \|} \geq \alpha > 0,
\end{equation}
\noindent where $\alpha$ is independent of $h$.  Examples include the MINI-element and Taylor-Hood family of elements \cite{John}.  The space of discretely divergence free functions is defined by 
\begin{align*}
V_{h} := \{v_{h} \in X_{h} : (q_{h}, \nabla \cdot v_{h}) = 0, \forall q_{h} \in Q_{h}\}.
\end{align*}
The discrete inf-sup condition implies that we may approximate functions in $V$ well by functions in $V_{h}$,
\begin{lemma} \label{l5}
	Suppose the discrete inf-sup condition (\ref{infsup}) holds, then for any $v \in V$
	\begin{equation*}
	\inf_{v_{h} \in V_{h}} \| \nabla (v - v_{h}) \| \leq C(\beta)\inf_{v_{h} \in X_{h}} \| \nabla (v - v_{h}) \|.
	\end{equation*}
\end{lemma}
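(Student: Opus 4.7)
The plan is to convert an arbitrary $X_h$-approximation of $v$ into a discretely divergence-free one by a small correction produced via the discrete inf-sup condition, and then to bound the correction in terms of the original error.

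First, fix $v \in V$ and let $\tilde v_h \in X_h$ be arbitrary. Since $v \in V$, we have $(q_h,\nabla\cdot v)=0$ for every $q_h \in Q_h \subset Q$, so the functional $q_h \mapsto (q_h,\nabla\cdot(\tilde v_h - v))$ on $Q_h$ coincides with $q_h \mapsto (q_h,\nabla\cdot \tilde v_h)$ and satisfies the bound
\[
\bigl|(q_h,\nabla\cdot(v-\tilde v_h))\bigr| \;\leq\; \|q_h\|\,\|\nabla\cdot(v-\tilde v_h)\| \;\leq\; \sqrt{d}\,\|q_h\|\,\|\nabla(v-\tilde v_h)\|.
\]

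Next, the discrete inf-sup condition \eqref{infsup} is equivalent to the surjectivity of the discrete divergence from $X_h^{\perp V_h}$ onto $Q_h$ with a bounded right inverse (a standard Brezzi/Fortin argument). Consequently there exists $z_h \in X_h$ satisfying
\[
(q_h,\nabla\cdot z_h) \;=\; (q_h,\nabla\cdot(v-\tilde v_h)) \quad \forall\, q_h \in Q_h,
\qquad \|\nabla z_h\| \;\leq\; \frac{\sqrt{d}}{\alpha}\,\|\nabla(v-\tilde v_h)\|.
\]
Setting $v_h := \tilde v_h + z_h \in X_h$, we find $(q_h,\nabla\cdot v_h)=(q_h,\nabla\cdot v)=0$ for all $q_h \in Q_h$, so $v_h \in V_h$. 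The triangle inequality then yields
\[
\|\nabla(v-v_h)\| \;\leq\; \|\nabla(v-\tilde v_h)\| + \|\nabla z_h\| \;\leq\; \bigl(1 + \tfrac{\sqrt{d}}{\alpha}\bigr)\,\|\nabla(v-\tilde v_h)\|,
\]
and taking the infimum over $\tilde v_h \in X_h$ on the right (followed by an infimum over $v_h \in V_h$ on the left) completes the proof with constant $C$ depending only on the inf-sup constant $\alpha$.

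The one nontrivial ingredient is the existence of the corrector $z_h$ with the stated bound; this is not stated elsewhere in the excerpt and is the step I would present in most detail, invoking the standard equivalence between the inf-sup condition and the continuous surjectivity of the discrete divergence. Everything else is just the triangle inequality.
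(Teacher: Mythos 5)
Your proof is correct, and it is essentially the same argument the paper points to: the paper gives no proof of its own, merely citing Girault--Raviart (Chapter 2, Theorem 1.1), and your construction---using the inf-sup condition \eqref{infsup} to get a bounded right inverse of the discrete divergence, correcting an arbitrary $\tilde v_h \in X_h$ by $z_h$ so that $\tilde v_h + z_h \in V_h$, then applying the triangle inequality---is precisely the standard proof of that cited theorem. It yields $C = 1 + \sqrt{d}/\alpha$, i.e.\ a constant depending only on the inf-sup constant, which is what the paper's notation ``$C(\beta)$'' is meant to indicate.
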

\begin{proof}
	See Chapter 2, Theorem 1.1 on p. 59 of \cite{Girault}.
\end{proof}
The standard inverse inequality \cite{Ern} will be useful:
$$\| \nabla v \| \leq C_{inv} h^{-1} \| v \| \; \; \; \forall v \in X_{h},$$ 
\noindent where $C_{inv}$ depends on the minimum angle in the triangulation.
A discrete Gronwall inequality will play a  role in the upcoming analysis.
\begin{lemma} \label{l4}
(Discrete Gronwall Lemma). Let $\Delta t$, H, $a_{n}$, $b_{n}$, $c_{n}$, and $d_{n}$ be finite nonnegative numbers for n $\geq$ 0 such that for N $\geq$ 1
\begin{align*}
a_{N} + \Delta t \sum^{N}_{0}b_{n} &\leq \Delta t \sum^{N-1}_{0} d_{n}a_{n} + \Delta t \sum^{N}_{0} c_{n} + H,
\end{align*}
then for all  $\Delta t > 0$ and N $\geq$ 1
\begin{align*}
a_{N} + \Delta t \sum^{N}_{0}b_{n} &\leq exp\big(\Delta t \sum^{N-1}_{0} d_{n}\big)\big(\Delta t \sum^{N}_{0} c_{n} + H\big).
\end{align*}
\end{lemma}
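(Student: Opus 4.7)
The plan is to absorb the $b_n$-sum on the left into a single quantity and then apply a standard induction. Define $E_N := a_N + \Delta t\sum_{n=0}^{N} b_n$ and $C_N := \Delta t\sum_{n=0}^{N} c_n + H$. Because $b_n, c_n \geq 0$, one has $a_n \leq E_n$ for every $n$, and $C_N$ is nondecreasing in $N$, so the hypothesis implies the reduced inequality
\begin{equation*}
E_N \;\leq\; \Delta t\sum_{n=0}^{N-1} d_n E_n \;+\; C_N,\qquad N\geq 1.
\end{equation*}

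I would then prove by induction on $N$ that $E_N \leq C_N \prod_{n=0}^{N-1}(1+\Delta t\, d_n)$. The base case uses the hypothesis with an empty inner sum (equivalently, the implicit initialization $a_0\leq H\leq C_0$). For the inductive step, substitute the inductive bound for each $E_n$ on the right, use the monotonicity $C_n \leq C_N$ to factor $C_N$ out of the sum, and then invoke the telescoping identity
\begin{equation*}
\Delta t\sum_{n=0}^{N-1} d_n \prod_{k=0}^{n-1}(1+\Delta t\, d_k)\;=\;\prod_{n=0}^{N-1}(1+\Delta t\, d_n)\;-\;1,
\end{equation*}
which follows from the observation that the partial products $P_n := \prod_{k=0}^{n-1}(1+\Delta t\, d_k)$ satisfy $P_{n+1}-P_n = \Delta t\, d_n P_n$ and telescope from $P_0=1$ to $P_N$.

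To finish, apply $1+x \leq e^x$ for $x\geq 0$ term by term to get $\prod_{n=0}^{N-1}(1+\Delta t\, d_n) \leq \exp\!\bigl(\Delta t\sum_{n=0}^{N-1} d_n\bigr)$, which combined with the induction yields exactly the claimed bound. There is no genuine obstacle in this argument, which is essentially the classical Heywood--Rannacher form of the discrete Gronwall lemma; the only mildly delicate bookkeeping point is the base case of the induction, and the $E_N$ substitution above handles it cleanly by placing the $b_n$ terms on the same footing as the $a_n$ terms so that no separate treatment is needed for them.
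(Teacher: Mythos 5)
Your argument is correct, but it is not the paper's: the paper offers no proof at all, simply citing Lemma 5.1 (p.\ 369) of Heywood--Rannacher, whereas you give the standard self-contained induction. Your reduction to $E_N \leq \Delta t\sum_{n=0}^{N-1} d_n E_n + C_N$, the strong-induction bound $E_N \leq C_N\prod_{n=0}^{N-1}(1+\Delta t\, d_n)$ via the telescoping identity $P_{n+1}-P_n=\Delta t\, d_n P_n$, and the final step $1+x\leq e^x$ are exactly the classical proof of the explicit discrete Gronwall lemma, and folding the $b_n$-sum into $E_N$ is a clean way to avoid treating those terms separately; what the citation buys the paper is brevity, what your proof buys is transparency about where each hypothesis enters. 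One point deserves emphasis rather than a parenthesis: your base case genuinely needs the $N=0$ instance of the inequality, $a_0+\Delta t\, b_0\leq \Delta t\, c_0+H$ (equivalently $E_0\leq C_0$), and this is \emph{not} contained in the hypothesis as the lemma is stated here, which requires the inequality only for $N\geq 1$ and so leaves $a_0$ unconstrained. Without some such initialization the stated conclusion can fail: take $\Delta t=1$, $b_n=c_n=0$, $H=1$, $d_0=1$, $d_n=0$ for $n\geq 1$, and $a_n=M$ for all $n$ with $M$ large; the hypothesis $a_N\leq d_0a_0+H=M+1$ holds for all $N\geq 1$, yet the asserted bound $a_1\leq e$ is violated. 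In Heywood--Rannacher the inequality is assumed for all $n\geq 0$, which supplies precisely the initialization you invoke, so your proof is sound for the lemma as it is used; just state explicitly that you are using the $N=0$ case (or the bound $a_0\leq H$) as an additional, and necessary, assumption rather than presenting it as a consequence of the hypothesis for $N\geq 1$.
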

\begin{proof}
	See Lemma 5.1 on p. 369 of \cite{Heywood}.
\end{proof}
\section{Numerical Schemes}
Denote the fully discrete solutions by $u^{n}_{h}$ and $p^{n}_{h}$ at time levels $t^{n} = n\Delta t$, $n = 0,1,...,N$, and $t^{\ast}=N\Delta t$.  The fully discrete approximations of (\ref{s1}) are
\\ \underline{Algorithm 1}
\\ \textbf{Step 1:}  Given $u^{n}_{h} \in X_{h}$, find $(\hat{u}^{n+1}_{h}, p^{n+1}_{h}) \in (X_{h},Q_{h})$ satisfying:
\begin{align}\label{step1}
(\frac{\hat{u}^{n+1}_{h} - u^{n}_{h}}{\Delta t},v_{h}) + b(u^{n}_{h},\hat{u}^{n+1}_{h},v_{h}) + \nu (\hat{u}^{n+1}_{h}, v_{h}) - (p^{n+1}_{h},\nabla \cdot v_{h}) = (f^{n+1},v_{h}) \; \; \forall v_{h} \in X_{h}, \\
(\nabla \cdot \hat{u}^{n+1}_{h}, q_{h}) \; \; \forall q_{h} \in Q_{h}. \notag
\end{align}
\\ \textbf{Step 2:}  Given $\hat{u}^{n+1}_{h} \in X_{h}$, find $u^{n+1}_{h} \in X_{h}$ satisfying:
\begin{align}\label{step2}
(u^{n+1}_{h},v_{h}) + (\beta + \gamma \Delta t) (\nabla \cdot u^{n+1}_{h}, \nabla \cdot v_{h}) = (\hat{u}^{n+1}_{h},v_{h}) + \beta (\nabla \cdot u^{n}_{h}, \nabla \cdot v_{h}) \; \; \forall v_{h} \in X_{h}.
\end{align}
\\ \underline{Algorithm 2}
\\ \textbf{Step 1:} Same as Step 1 in Algorithm 1.
\\ \textbf{Step 2:}  Given $\hat{u}^{n+1}_{h} \in X_{h}$, find $u^{n+1}_{h} \in X_{h}$ satisfying:
\begin{align}\label{step2b}
(u^{n+1}_{h},v_{h}) + \gamma \Delta t gd_{lag}(u^{n+1}_{h},v_{h}) = (\hat{u}^{n+1}_{h},v_{h}) \; \; \forall v_{h} \in X_{h}.
\end{align}

The linear systems resulting from either algorithm in Step 2 prescribe both the tangential and normal components of velocity on the boundary.  None-the-less, solutions exist uniquely for both algorithms and converge to the true NSE solution (Theorems \ref{t3} and \ref{t4} below).
\begin{theorem}
Consider Algorithm 1 (\ref{step1}) -( \ref{step2}) or Algorithm 2 (\ref{step1}) - ( \ref{step2b}).  Suppose $f^{n+1} \in H^{-1}(\Omega)^{d}$ and $u^{n}_{h} \in X_{h}$.  Then, there exists unique solutions $\hat{u}^{n+1}_{h}, \; u^{n+1}_{h} \in X_{h}$ to Step 1 and Step 2.
\end{theorem}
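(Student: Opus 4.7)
The plan is to treat Step 1 and Step 2 independently, since neither algorithm couples them beyond the right-hand side data.

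For Step 1 (common to both algorithms), I would recognize this as a standard discrete Oseen saddle-point problem and verify the hypotheses of Brezzi's theorem in the finite-dimensional setting. Define the velocity bilinear form
\begin{equation*}
a_{h}(u,v) = \tfrac{1}{\Delta t}(u,v) + b(u^{n}_{h},u,v) + \nu(\nabla u,\nabla v).
\end{equation*}
Skew-symmetry of $b(u^{n}_{h},\cdot,\cdot)$ in its last two arguments forces the convective contribution to vanish when $v=u$, giving $a_{h}(v,v)\geq \nu\|\nabla v\|^{2}$, which is coercive on $X_{h}$; continuity is immediate from Lemma \ref{l1}. The compatibility condition on the $(p,\nabla\cdot v)$ coupling is exactly the assumed discrete inf-sup condition (\ref{infsup}). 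Because $X_{h}\times Q_{h}$ is finite-dimensional, these two ingredients yield existence and uniqueness of $(\hat{u}^{n+1}_{h},p^{n+1}_{h})$. Equivalently, one may first solve on $V_{h}$ (where the pressure drops out and Lax--Milgram applies directly), then recover $p^{n+1}_{h}$ via the inf-sup condition.

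For Step 2 of Algorithm 1, rewrite (\ref{step2}) as $A(u^{n+1}_{h},v_{h})=F(v_{h})$ with
\begin{equation*}
A(u,v) = (u,v) + (\beta+\gamma\Delta t)(\nabla\cdot u,\nabla\cdot v),
\end{equation*}
and $F(v)=(\hat{u}^{n+1}_{h},v)+\beta(\nabla\cdot u^{n}_{h},\nabla\cdot v)$. Since $\beta+\gamma\Delta t\geq 0$, $A$ is symmetric, continuous, and coercive with $A(v,v)\geq\|v\|^{2}$; $F$ is a bounded linear functional on $X_{h}$. Lax--Milgram (or in finite dimensions, injectivity implies invertibility of the associated SPD matrix) gives existence of a unique $u^{n+1}_{h}\in X_{h}$. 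For Step 2 of Algorithm 2, the key observation is that in $gd_{lag}(u^{n+1}_{h},v_{h})$ the cross-derivative off-diagonal terms involve only the known $u^{n}_{h}$, so moving them to the right-hand side leaves the bilinear form in $u^{n+1}_{h}$ equal to
\begin{equation*}
B(u,v) = (u,v) + \gamma\Delta t\,\sum_{i=1}^{d}(u_{i,i},v_{i,i}),
\end{equation*}
which is again symmetric and coercive with $B(v,v)\geq\|v\|^{2}$. Lax--Milgram then concludes.

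There is no real obstacle here; the only step requiring care is the bookkeeping for Algorithm 2, namely confirming that after the cross terms are lagged, the implicit operator for $u^{n+1}_{h}$ collapses to the sum of the identity and the pure second derivative in each coordinate direction, so that coercivity is trivial and independent of the sign/size of $\gamma\Delta t$.
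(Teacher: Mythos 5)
Your proposal is correct and is essentially the paper's own argument: the decisive computation is identical (test with the candidate solution, let skew-symmetry annihilate the convective term, and use $\beta + \gamma\Delta t \geq 0$ for Step 2), with the only difference being that you package it as Lax--Milgram/Brezzi while the paper works on $V_h$ and invokes the finite-dimensional equivalence of uniqueness and existence for the resulting linear systems. One small caveat: your closing remark that coercivity for Step 2 of Algorithm 2 is independent of the sign of $\gamma\Delta t$ is not literally true, since $B(v,v) \geq \|v\|^{2}$ uses $\gamma \geq 0$, an assumption the paper makes throughout.
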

\begin{proof}
Both Step 1 and 2 reduce to solving a finite dimensional linear system after picking a basis.  Step 1 is equivalent to
\begin{align}\label{equivstep1}
(\hat{u}^{n+1}_{h},v_{h}) + \Delta t b(u^{n}_{h},\hat{u}^{n+1}_{h},v_{h}) + \nu \Delta t (\hat{u}^{n+1}_{h}, v_{h}) = \Delta t (f^{n+1},v_{h}) + (u^{n}_{h},v_{h}) \; \; \forall v_{h} \in V_{h}.
\end{align}
Existence is equivalent to uniqueness, thus we must show $\hat{u}^{n+1}_{h} \equiv 0$ provided the r.h.s. is zero.  Let $v_{h} = \hat{u}^{n+1}_{h} \in V_{h}$, then
\begin{align*}
\|\hat{u}^{n+1}_{h}\|^{2} + \nu \Delta t \|\nabla \hat{u}^{n+1}_{h}\|^{2} = 0,
\end{align*}
which implies $\hat{u}^{n+1}_{h} \equiv 0$.  Similarly, for Step 2, we must show $u^{n+1}_{h} \equiv 0$ provided the r.h.s. is zero.  Letting $v_{h} = u^{n+1}_{h} \in X_{h}$ yields
\begin{align*}
\|u^{n+1}_{h}\|^{2} + (\beta + \gamma \Delta t) \|\nabla \cdot u^{n+1}_{h}\|^{2} = 0.
\end{align*}
Uniqueness for Step 2 thus follows as well.
\end{proof}
\section{Numerical Analysis of the Modular Algorithms}
In Theorems \ref{t1} and \ref{t2}, the stability of the velocity approximations are proven for the schemes (\ref{step1}) - (\ref{step2}) and (\ref{step1}) - (\ref{step2b}).  Moreover, in Theorem \ref{t3} and \ref{t4}, first-order convergence of these algorithms is proven.

\subsection{Stability Analysis}
The following lemma is key to the stability analyses on the effect of Step 2.
\begin{lemma} \label{l3}
Consider (\ref{step2}) in Step 2 of Algorithm 1.  The following holds
\begin{multline*}
\|\hat{u}^{n+1}_{h}\|^{2} = \|u^{n+1}_{h}\|^{2} + \|\hat{u}^{n+1}_{h} - u^{n+1}_{h}\|^{2} + 2\gamma \Delta t \|\nabla \cdot u^{n+1}_{h}\|^{2} + \beta \Big(\|\nabla \cdot u^{n+1}_{h}\|^{2} - \|\nabla \cdot u^{n}_{h}\|^{2}
\\ + \| \nabla \cdot (u^{n+1}_{h} - u^{n}_{h}) \|^{2} \Big).
\end{multline*}
Moreover, consider (\ref{step2b}) in Step 2 of Algorithm 2.  Then, the following holds
\begin{multline*} 
\|\hat{u}^{n+1}_{h}\|^{2} = \|u^{n+1}_{h}\|^{2} + \|\hat{u}^{n+1}_{h} - u^{n+1}_{h}\|^{2} + \gamma \Delta t\Big(\|u^{n+1}_{1h,1}\|^{2} - \|u^{n}_{1h,1}\|^{2} + \|u^{n+1}_{2h,2}\|^{2} - \|u^{n}_{2h,2}\|^{2}\Big)
\\ + \gamma \Delta t \Big(\|u^{n+1}_{1h,1} + u^{n}_{2h,2} \|^{2} + \|u^{n+1}_{2h,2} + u^{n}_{1h,1} \|^{2}\Big).
\end{multline*}
\begin{proof}
For the first claim, let $v_{h} = u^{n+1}_{h}$ in equation (\ref{step2}) and rearrange.  Then,
\begin{equation} \label{stability:main}
(\hat{u}^{n+1}_{h},u^{n+1}_{h}) = \|u^{n+1}_{h}\|^{2} + (\beta + \gamma \Delta t) \|\nabla \cdot u^{n+1}_{h}\|^{2} - \beta (\nabla \cdot u^{n}_{h}, \nabla \cdot u^{n+1}_{h}).
\end{equation}
Consider $(\hat{u}^{n+1}_{h},u^{n+1}_{h})$ and $-\beta (\nabla \cdot u^{n}_{h}, \nabla \cdot u^{n+1}_{h})$.  Use the polarization identity on each term.  Then,
\begin{align}\label{stability:hat1}
(\hat{u}^{n+1}_{h},u^{n+1}_{h}) &= \frac{1}{2}\Big(\|\hat{u}^{n+1}_{h}\|^{2} + \|u^{n+1}_{h}\|^{2} - \|\hat{u}^{n+1}_{h} - u^{n+1}_{h}\|^{2}\Big),\\
-\beta (\nabla \cdot u^{n}_{h}, \nabla \cdot u^{n+1}_{h}) &= -\frac{\beta}{2}\Big( \|\nabla \cdot u^{n}_{h}\|^{2} + \| \nabla \cdot u^{n+1}_{h}\|^{2} - \| \nabla \cdot (u^{n+1}_{h} - u^{n}_{h}) \|^{2} \Big). \label{stability:timestepcoupling}
\end{align}
Use (\ref{stability:hat1}) and (\ref{stability:timestepcoupling}) in (\ref{stability:main}) and multiply by 2.  This yields
\begin{align*}
\|\hat{u}^{n+1}_{h}\|^{2} = \|u^{n+1}_{h}\|^{2} + \|\hat{u}^{n+1}_{h} - u^{n+1}_{h}\|^{2} + 2\gamma \Delta t \|\nabla \cdot u^{n+1}_{h}\|^{2} + \beta \Big(\|\nabla \cdot u^{n+1}_{h}\|^{2} - \|\nabla \cdot u^{n}_{h}\|^{2}
\\ + \| \nabla \cdot (u^{n+1}_{h} - u^{n}_{h}) \|^{2} \Big),
\end{align*}
as needed.  
\\ \indent For the second claim, consider equation (\ref{step2}), let $v_{h} = u^{n+1}_{h}$ and rearrange.  Then,
\begin{equation} \label{stability:mainb}
(\hat{u}^{n+1}_{h},u^{n+1}_{h}) = \|u^{n+1}_{h}\|^{2} + \gamma \Delta t gd_{lag}(u^{n+1}_{h},u^{n+1}_{h}).
\end{equation}
Use Lemma \ref{l1} and rearrange.  Then,
\begin{multline} 
(\hat{u}^{n+1}_{h},u^{n+1}_{h}) = \|u^{n+1}_{h}\|^{2} + \frac{\gamma \Delta t}{2} \Big(\|u^{n+1}_{1h,1}\|^{2} - \|u^{n}_{1h,1}\|^{2} + \|u^{n+1}_{2h,2}\|^{2} - \|u^{n}_{2h,2}\|^{2}\Big)
\\ + \frac{\gamma \Delta t}{2} \Big(\|u^{n+1}_{1h,1} + u^{n}_{2h,2} \|^{2} + \|u^{n+1}_{2h,2} + u^{n}_{1h,1} \|^{2}\Big).
\end{multline}
Using (\ref{stability:hat1}) in the above equation and multiplying by 2 yields the result.
\end{proof}
\end{lemma}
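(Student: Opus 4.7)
The plan is to test each Step 2 equation with $v_h = u^{n+1}_h$, obtain a scalar identity involving $(\hat{u}^{n+1}_h, u^{n+1}_h)$ and the grad-div contributions, and then convert every remaining inner product into squared norms via the polarization identity $(a,b) = \tfrac{1}{2}(\|a\|^{2} + \|b\|^{2} - \|a-b\|^{2})$. Multiplying through by $2$ at the end is what produces the leading $\|\hat{u}^{n+1}_h\|^{2}$ and the numerical-dissipation term $\|\hat{u}^{n+1}_h - u^{n+1}_h\|^{2}$; the grad-div part of the identity then carries all algorithm-specific structure.

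For the first claim, choosing $v_h = u^{n+1}_h$ in (\ref{step2}) and rearranging gives
\begin{equation*}
(\hat{u}^{n+1}_h, u^{n+1}_h) \;=\; \|u^{n+1}_h\|^{2} + (\beta + \gamma \Delta t)\|\nabla\cdot u^{n+1}_h\|^{2} - \beta\,(\nabla\cdot u^{n}_h, \nabla\cdot u^{n+1}_h).
\end{equation*}
I would next polarize the left-hand side to expose $\|\hat{u}^{n+1}_h\|^{2}$ and $\|\hat{u}^{n+1}_h-u^{n+1}_h\|^{2}$, and polarize the $\beta$ cross term to expose $\|\nabla\cdot u^{n}_h\|^{2}$, $\|\nabla\cdot u^{n+1}_h\|^{2}$, and $\|\nabla\cdot(u^{n+1}_h-u^{n}_h)\|^{2}$. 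Multiplying by $2$ and grouping the $\beta$ terms (the two $\beta\|\nabla\cdot u^{n+1}_h\|^{2}$ pieces combine with the $(\beta+\gamma\Delta t)$ coefficient to leave $2\gamma\Delta t\|\nabla\cdot u^{n+1}_h\|^{2}$ plus the desired $\beta$ difference) yields the claimed identity.

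For the second claim, testing (\ref{step2b}) with $v_h = u^{n+1}_h$ produces
\begin{equation*}
(\hat{u}^{n+1}_h, u^{n+1}_h) \;=\; \|u^{n+1}_h\|^{2} + \gamma\Delta t\, gd_{lag}(u^{n+1}_h, u^{n+1}_h).
\end{equation*}
The key input is then the first identity of Lemma \ref{l2} (applied with the index shifted from $n$ to $n+1$), which expands $gd_{lag}(u^{n+1}_h, u^{n+1}_h)$ into the telescoping differences $\|u^{n+1}_{1h,1}\|^{2}-\|u^{n}_{1h,1}\|^{2}$, $\|u^{n+1}_{2h,2}\|^{2}-\|u^{n}_{2h,2}\|^{2}$ plus the two nonnegative squared sums $\|u^{n+1}_{1h,1}+u^{n}_{2h,2}\|^{2}$ and $\|u^{n+1}_{2h,2}+u^{n}_{1h,1}\|^{2}$, each with coefficient $\tfrac{1}{2}$. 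Polarizing $(\hat{u}^{n+1}_h, u^{n+1}_h)$ as before and multiplying by $2$ gives the stated formula.

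Neither step presents a genuine obstacle; both proofs are essentially bookkeeping over the polarization identity once the correct test function has been chosen. The only delicate point is lining up the factors of $\tfrac{1}{2}$: the doubling step must be applied uniformly so that the two $\beta$ (resp.\ $\gamma\Delta t$) contributions from the polarized inner product and the polarized cross term combine correctly. Since Lemma \ref{l2} is only available in two space dimensions, the second claim is implicitly restricted to $d=2$, consistent with the remark preceding that lemma.
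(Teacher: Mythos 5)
Your proposal is correct and follows essentially the same route as the paper: test (\ref{step2}) (resp.\ (\ref{step2b})) with $v_{h}=u^{n+1}_{h}$, polarize $(\hat{u}^{n+1}_{h},u^{n+1}_{h})$ and the $\beta$ cross term, invoke the first identity of Lemma \ref{l2} for the lagged grad-div term, and multiply by $2$. The only difference is cosmetic: you correctly cite Lemma \ref{l2} for the expansion of $gd_{lag}(u^{n+1}_{h},u^{n+1}_{h})$, whereas the paper's proof text refers to Lemma \ref{l1}, which is evidently a typographical slip.
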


Next, we prove unconditional, nonlinear, long-time, energy stability of Algorithm 1.

\begin{theorem} \label{t1}
Consider Algorithm 1, (\ref{step1}) - (\ref{step2}).   Suppose $f \in L^{2}(0,t^{\ast};H^{-1}(\Omega)^{d})$, then
\begin{multline*}
\|u^{N}_{h}\|^{2} + \beta \|\nabla \cdot u^{N}_{h}\|^{2} + \sum_{n=0}^{N-1}\Big(\|\hat{u}^{n+1}_{h} - u^{n+1}_{h}\|^{2} + \|\hat{u}^{n+1}_{h} - u^{n}_{h}\|^{2} + \beta \| \nabla \cdot (u^{n+1}_{h} - u^{n}_{h})\|^{2} \Big)
\\ + 2\gamma \vertiii{\nabla \cdot u_{h}}^{2}_{2,0} + \nu \vertiii{\nabla \hat{u}_{h}}^{2}_{2,0} \leq \|u^{0}_{h}\|^{2} + \beta \|\nabla \cdot u^{0}_{h}\|^{2} + \frac{1}{\nu} \vertiii{f^{n+1}}^{2}_{2,-1}.
\end{multline*}
\end{theorem}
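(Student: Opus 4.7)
The plan is to test Step 1 against the velocity itself, invoke the skew-symmetry of $b$ and the discrete divergence-free property of $\hat{u}^{n+1}_h$ to eliminate the nonlinear and pressure terms, use polarization on the time-derivative inner product, then use Lemma \ref{l3} to trade the intermediate-velocity energy $\|\hat{u}^{n+1}_h\|^2$ for the true-step energy $\|u^{n+1}_h\|^2$ plus the stabilization and numerical-dissipation terms, and finally sum telescopically.

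First I would choose $v_h = \hat{u}^{n+1}_h$ in \eqref{step1}. Skew-symmetry of $b$ gives $b(u^n_h,\hat{u}^{n+1}_h,\hat{u}^{n+1}_h)=0$, and because Step 1 enforces $(\nabla\cdot\hat{u}^{n+1}_h,q_h)=0$ for all $q_h\in Q_h$ we have $\hat{u}^{n+1}_h\in V_h$, which kills the pressure term. Applying the polarization identity to $(\hat{u}^{n+1}_h-u^n_h,\hat{u}^{n+1}_h)$ and multiplying by $2\Delta t$ produces
\begin{equation*}
\|\hat{u}^{n+1}_h\|^2 - \|u^n_h\|^2 + \|\hat{u}^{n+1}_h-u^n_h\|^2 + 2\nu\Delta t\,\|\nabla\hat{u}^{n+1}_h\|^2 = 2\Delta t\,(f^{n+1},\hat{u}^{n+1}_h).
\end{equation*}
Bound the right-hand side by duality and Young, $2\Delta t(f^{n+1},\hat{u}^{n+1}_h)\le \frac{\Delta t}{\nu}\|f^{n+1}\|_{-1}^2 + \nu\Delta t\|\nabla\hat{u}^{n+1}_h\|^2$, so that half of the viscous term is absorbed on the left.

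Next I would feed in the first identity of Lemma \ref{l3} to rewrite
\begin{equation*}
\|\hat{u}^{n+1}_h\|^2 = \|u^{n+1}_h\|^2 + \|\hat{u}^{n+1}_h-u^{n+1}_h\|^2 + 2\gamma\Delta t\,\|\nabla\cdot u^{n+1}_h\|^2 + \beta\bigl(\|\nabla\cdot u^{n+1}_h\|^2-\|\nabla\cdot u^{n}_h\|^2+\|\nabla\cdot(u^{n+1}_h-u^n_h)\|^2\bigr).
\end{equation*}
Substituting, every quantity on the left side of the target estimate appears with a nonnegative coefficient, and the $\|u^n_h\|^2$ and $\beta\|\nabla\cdot u^n_h\|^2$ pieces are perfectly positioned for telescoping under summation in $n$ from $0$ to $N-1$.

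Summing yields exactly the stated bound: the pointwise energies $\|u^N_h\|^2+\beta\|\nabla\cdot u^N_h\|^2$ come from the telescoping, the three dissipation sums remain as additional positive control on the left, the $2\gamma\,\vertiii{\nabla\cdot u_h}_{2,0}^2$ and $\nu\,\vertiii{\nabla\hat{u}_h}_{2,0}^2$ terms arise directly from the stabilization and viscous contributions, and the initial data and forcing go to the right. No Gronwall step is required, which is the source of the \emph{long-time} (unconditional in $N$) nature of the estimate. I do not anticipate a serious obstacle; the only subtlety is the correct bookkeeping when Lemma \ref{l3} is used to exchange $\|\hat{u}^{n+1}_h\|^2$ for $\|u^{n+1}_h\|^2$ plus the grad-div block, and checking that the $\|\hat{u}^{n+1}_h-u^n_h\|^2$ numerical-dissipation term from Step 1 is preserved intact alongside the new $\|\hat{u}^{n+1}_h-u^{n+1}_h\|^2$ term produced by Step 2.
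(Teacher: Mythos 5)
Your proposal is correct and follows essentially the same route as the paper's proof: testing Step 1 with (a multiple of) $\hat{u}^{n+1}_h$, using skew-symmetry and discrete divergence-freeness to remove the nonlinear and pressure terms, applying polarization and the Young-type bound on the forcing, invoking the first identity of Lemma \ref{l3} to convert $\|\hat{u}^{n+1}_h\|^2$ into the $u^{n+1}_h$ energy plus the grad-div and dissipation terms, and summing telescopically without any Gronwall step. No gaps; the bookkeeping you flag is exactly what the paper does.
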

\begin{proof}
Let $(v_{h},q_{h}) = (2\Delta t\hat{u}^{n+1}_{h},2\Delta tp^{n+1}_{h})$ in equation (\ref{step1}), use skew-symmetry, the polarization identity, and rearrange.  Then,
\begin{align}
\|\hat{u}^{n+1}_{h}\|^{2} - \|u^{n}_{h}\|^{2} + \|\hat{u}^{n+1}_{h} - u^{n}_{h}\|^{2} + 2\nu \Delta t \|\nabla \hat{u}^{n+1}_{h}\|^{2} =  2\Delta t (f^{n+1},\hat{u}^{n+1}_{h}). \label{stability:main2}
\end{align}
Use Lemma \ref{l3} in (\ref{stability:main2}).  This yields
\begin{align} \label{stability:afterlemma}
\|u^{n+1}_{h}\|^{2} - \|u^{n}_{h}\|^{2} + \|\hat{u}^{n+1}_{h} - u^{n+1}_{h}\|^{2} + \|\hat{u}^{n+1}_{h} - u^{n}_{h}\|^{2}  + \beta \Big(\|\nabla \cdot u^{n+1}_{h}\|^{2} - \|\nabla \cdot u^{n}_{h}\|^{2}
\\ + \| \nabla \cdot (u^{n+1}_{h} - u^{n}_{h}) \|^{2} \Big) + 2\gamma \Delta t \|\nabla \cdot u^{n+1}_{h}\|^{2} + 2\nu \Delta t \|\nabla \hat{u}^{n+1}_{h}\|^{2} =  2\Delta t (f^{n+1},\hat{u}^{n+1}_{h}). \notag
\end{align}
Use the Cauchy-Schwarz-Young inequality on $\Delta t (f^{n+1},\hat{u}^{n+1}_{h})$,
\begin{align}
2\Delta t (f^{n+1},\hat{u}^{n+1}_{h}) &\leq \frac{\Delta t}{\nu} \|f^{n+1}\|^{2}_{-1} + \nu \Delta t \|\nabla \hat{u}^{n+1}_{h}\|^{2}. \label{stability:estf}
\end{align}
Use the above estimate in (\ref{stability:afterlemma}).  Then,
\begin{align*}
\|u^{n+1}_{h}\|^{2} - \|u^{n}_{h}\|^{2} + \|\hat{u}^{n+1}_{h} - u^{n+1}_{h}\|^{2} + \|\hat{u}^{n+1}_{h} - u^{n}_{h}\|^{2}  + \beta \Big(\|\nabla \cdot u^{n+1}_{h}\|^{2} - \|\nabla \cdot u^{n}_{h}\|^{2} + \| \nabla \cdot (u^{n+1}_{h} - u^{n}_{h}) \|^{2} \Big) 
\\ + 2\gamma \Delta t \|\nabla \cdot u^{n+1}_{h}\|^{2} + \nu \Delta t \|\nabla \hat{u}^{n+1}_{h}\|^{2} \leq \frac{\Delta t}{\nu} \|f^{n+1}\|^{2}_{-1}.
\end{align*}
\noindent Sum from $n = 0$ to $n = N-1$ and put all data on the r.h.s.  This yields
\begin{multline*}
\|u^{N}_{h}\|^{2} + \beta \|\nabla \cdot u^{N}_{h}\|^{2} + \sum_{n=0}^{N-1}\Big(\|\hat{u}^{n+1}_{h} - u^{n+1}_{h}\|^{2} + \|\hat{u}^{n+1}_{h} - u^{n}_{h}\|^{2} + \beta \| \nabla \cdot (u^{n+1}_{h} - u^{n}_{h})\|^{2} \Big)
\\ + 2\gamma \vertiii{\nabla \cdot u_{h}}^{2}_{2,0} + \nu \vertiii{\nabla \hat{u}_{h}}^{2}_{2,0} \leq \|u^{0}_{h}\|^{2} + \beta \|\nabla \cdot u^{0}_{h}\|^{2} + \frac{1}{\nu} \vertiii{f^{n+1}}^{2}_{2,-1}.
\end{multline*}
\noindent Therefore, the l.h.s. is bounded by data on the r.h.s. The velocity approximation is stable.
\end{proof}

Stability also holds in 2d for modular lagged grad-div in Algorithm 2.  As noted above, stability in 3d is an open problem.

\begin{theorem}[Lagged grad-div] \label{t2}
Consider Algorithm 2, (\ref{step1}) - (\ref{step2b}).   Suppose $f \in L^{2}(0,t^{\ast};H^{-1}(\Omega)^{d})$, then
\begin{multline*}
	\|u^{N}_{h}\|^{2} + \gamma \Delta t \Big(\|u^{N}_{1h,1}\|^{2} + \|u^{N}_{2h,2}\|^{2}\Big) + \sum_{n=0}^{N-1}\Big(\|\hat{u}^{n+1}_{h} - u^{n+1}_{h}\|^{2} + \|\hat{u}^{n+1}_{h} - u^{n}_{h}\|^{2}\Big)
	\\ + \gamma \Delta t \sum_{n=0}^{N-1}\Big(\|u^{n+1}_{1h,1} + u^{n}_{2h,2} \|^{2} + \|u^{n+1}_{2h,2} + u^{n}_{1h,1} \|^{2}\Big) + \nu \vertiii{\nabla \hat{u}_{h}}^{2}_{2,0}  \leq \|u^{0}_{h}\|^{2}
	\\ + \gamma \Delta t \Big(\|u^{0}_{1h,1}\|^{2} + \|u^{0}_{2h,2}\|^{2}\Big) + \frac{1}{\nu} \vertiii{f^{n+1}}^{2}_{2,-1}.
\end{multline*}
\end{theorem}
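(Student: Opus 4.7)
The plan is to mirror the proof of Theorem \ref{t1}, replacing the standard grad-div identity with the second (lagged) identity from Lemma \ref{l3}. First, I would test the momentum equation (\ref{step1}) with $v_h = 2\Delta t \hat{u}^{n+1}_h$ and the incompressibility equation with $q_h = 2\Delta t p^{n+1}_h$. Skew-symmetry of $b(\cdot,\cdot,\cdot)$ kills the trilinear term, the pressure terms cancel since $\hat{u}^{n+1}_h$ is discretely divergence free at step~1, and the polarization identity on the time derivative yields exactly equation (\ref{stability:main2}):
\begin{align*}
\|\hat{u}^{n+1}_{h}\|^{2} - \|u^{n}_{h}\|^{2} + \|\hat{u}^{n+1}_{h} - u^{n}_{h}\|^{2} + 2\nu \Delta t \|\nabla \hat{u}^{n+1}_{h}\|^{2} =  2\Delta t (f^{n+1},\hat{u}^{n+1}_{h}).
\end{align*}

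Second, I would invoke the second identity of Lemma \ref{l3}, which expresses $\|\hat{u}^{n+1}_h\|^2$ as $\|u^{n+1}_h\|^2$ plus the nonnegative numerical dissipation $\|\hat{u}^{n+1}_h - u^{n+1}_h\|^2$, plus the telescoping lagged grad-div contribution $\gamma \Delta t(\|u^{n+1}_{1h,1}\|^2 - \|u^n_{1h,1}\|^2 + \|u^{n+1}_{2h,2}\|^2 - \|u^n_{2h,2}\|^2)$, plus the nonnegative cross-term quantity $\gamma \Delta t(\|u^{n+1}_{1h,1} + u^n_{2h,2}\|^2 + \|u^{n+1}_{2h,2} + u^n_{1h,1}\|^2)$. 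Substituting this into the above identity converts the $\hat{u}$-norm difference into a clean $u$-norm difference plus genuinely nonnegative stabilization terms on the left-hand side.

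Third, I would bound the forcing using Cauchy--Schwarz--Young as in (\ref{stability:estf}): $2\Delta t(f^{n+1},\hat{u}^{n+1}_h) \leq \tfrac{\Delta t}{\nu}\|f^{n+1}\|^2_{-1} + \nu \Delta t \|\nabla \hat{u}^{n+1}_h\|^2$, absorbing half of the viscous dissipation. Summing from $n = 0$ to $N-1$ telescopes both $\|u^{n+1}_h\|^2 - \|u^n_h\|^2$ and the $\gamma\Delta t$-weighted diagonal derivative terms, leaving the boundary values $\|u^N_h\|^2$, $\gamma\Delta t(\|u^N_{1h,1}\|^2 + \|u^N_{2h,2}\|^2)$, the initial data, and the accumulated nonnegative dissipation and cross-term squares exactly in the form claimed.

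I do not anticipate a real obstacle: once the lagged identity of Lemma \ref{l3} is in hand, the argument is structurally identical to that of Theorem \ref{t1}, and no Gronwall lemma is needed because the right-hand side contains only data. The one subtlety worth checking carefully is the sign of the cross-term contribution from $gd_{lag}$; the polarization identity in Lemma \ref{l2} was arranged precisely so that these terms appear as nonnegative squares on the left after substitution, which is what restricts the proof to two spatial dimensions and leaves the 3d case open.
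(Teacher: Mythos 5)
Your proposal is correct and follows essentially the same route as the paper: test Step 1 with $(2\Delta t\,\hat{u}^{n+1}_{h},\,2\Delta t\,p^{n+1}_{h})$ to obtain (\ref{stability:main2}), substitute the second (lagged) identity of Lemma \ref{l3}, bound the forcing via (\ref{stability:estf}), and sum so the diagonal derivative terms telescope — no Gronwall needed. Your remark that the cross-terms enter as nonnegative squares (the feature tied to the 2d restriction) is also consistent with the paper's argument.
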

\begin{proof}
	Let $(v_{h},q_{h}) = (2\Delta t\hat{u}^{n+1}_{h},2\Delta tp^{n+1}_{h})$ in equation (\ref{step1}) and similar techniques yields equation (\ref{stability:main2}).  Moreover, using Lemma \ref{l3} in (\ref{stability:main2}) yields
\begin{multline} \label{stability:afterlemma2}
\|u^{n+1}_{h}\|^{2} - \|u^{n}_{h}\|^{2} + \|\hat{u}^{n+1}_{h} - u^{n+1}_{h}\|^{2} + \|\hat{u}^{n+1}_{h} - u^{n}_{h}\|^{2} 
\\ + \gamma \Delta t \Big(\|u^{n+1}_{1h,1}\|^{2} - \|u^{n}_{1h,1}\|^{2} + \|u^{n+1}_{2h,2}\|^{2} - \|u^{n}_{2h,2}\|^{2}\Big)
\\ + \gamma \Delta t \Big(\|u^{n+1}_{1h,1} + u^{n}_{2h,2} \|^{2} + \|u^{n+1}_{2h,2} + u^{n}_{1h,1} \|^{2}\Big) + 2\nu \Delta t \|\nabla \hat{u}^{n+1}_{h}\|^{2} =  2\Delta t (f^{n+1},\hat{u}^{n+1}_{h}).
\end{multline}
Use estimate (\ref{stability:estf}) in (\ref{stability:afterlemma2}).  Then,
\begin{align*}
\|u^{n+1}_{h}\|^{2} - \|u^{n}_{h}\|^{2} + \|\hat{u}^{n+1}_{h} - u^{n+1}_{h}\|^{2} + \|\hat{u}^{n+1}_{h} - u^{n}_{h}\|^{2} + \gamma \Delta t \Big(\|u^{n+1}_{1h,1}\|^{2} - \|u^{n}_{1h,1}\|^{2} + \|u^{n+1}_{2h,2}\|^{2} - \|u^{n}_{2h,2}\|^{2}\Big)
\\ + \gamma \Delta t \Big(\|u^{n+1}_{1h,1} + u^{n}_{2h,2} \|^{2} + \|u^{n+1}_{2h,2} + u^{n}_{1h,1} \|^{2}\Big) + \nu \Delta t \|\nabla \hat{u}^{n+1}_{h}\|^{2} \leq \frac{\Delta t}{\nu} \|f^{n+1}\|^{2}_{-1}.
\end{align*}
\noindent Summing from $n = 0$ to $n = N-1$ and putting all data on the r.h.s. yields the result. Therefore, the l.h.s. is bounded by data on the r.h.s. The velocity approximation is stable.
\end{proof}

Theorem \ref{t1} shows that the natural kinetic energy and energy dissipation rates of Algorithm 1 are, respectively,
\begin{align*}
Kinetic \; energy (t^{N}) &= \frac{1}{2} \|u^{N}_{h}\|^{2} + \frac{1}{2} \beta \| \nabla \cdot u^{N}_{h} \|^{2}, \\
Energy \; dissipation \; rate (t^{n}) &= \frac{\Delta t}{2} \|\frac{\hat{u}^{n+1}_{h} - u^{n+1}_{h}}{\Delta t}\|^{2} + \frac{\Delta t}{2} \| \frac{\hat{u}^{n+1}_{h} - u^{n}_{h}}{\Delta t}\|^{2} + \frac{\beta \Delta t}{2} \| \nabla \cdot \big(\frac{u^{n+1}_{h} - u^{n}_{h}}{\Delta t}\big)\|^{2}
\\ &+ \gamma \| \nabla \cdot u^{n+1}_{h}\|^{2} + \frac{\nu}{2}\| \nabla \hat{u}^{n+1}_{h} \|^{2}.
\end{align*}
The term $\frac{1}{2} \beta \| \nabla \cdot u^{N}_{h} \|^{2}$ in the kinetic energy arises from the dispersive regularization $-\beta \nabla \nabla \cdot u_{t}$.  In the energy dissipation rate, the second and fifth terms are the standard numerical and molecular dissipation.  The other terms penalize violations of incompressibility.  A similar interpretation holds for the terms in Theorem \ref{t2} for Algorithm 2.
\subsection{Error Analysis}
We now analyze the method's consistency error and prove convergence at the expected rates.  Denote $u^{n}$ and $p^{n}$ as the true solutions at time $t^{n} = n\Delta t$.  Assume the solutions satisfy the following regularity assumptions:
\begin{align} 
u \in L^{\infty}(0,t^{\ast};X \cap H^{k+1}(\Omega)^{d}), \; u_{t} &\in L^{\infty}(0,t^{\ast};H^{k+1}(\Omega)^{d}),\label{error:regularity}
\\ u_{tt} \in L^{2}(0,t^{\ast};L^{2}(\Omega)^{d}), \; p &\in L^{2}(0,t^{\ast};Q \cap H^{m}(\Omega)). \label{error:regularity2}
\end{align}
The errors for the solution variables are denoted
\begin{align*}
e^{n}_{u} &= u^{n} - u^{n}_{h}, \; e^{n}_{\hat{u}} = \hat{u}^{n} - u^{n}_{h}, \; e^{n}_{p} = p^{n} - p^{n}_{h}.
\end{align*}\vspace*{-\baselineskip}
\begin{definition} (Consistency errors).  The consistency errors are defined as
\begin{align*}
\tau_{1}(u^{n};v_{h}) &= \big(\frac{u^{n}-u^{n-1}}{\Delta t} - u^{n}_{t}, v_{h}\big) - b(u^{n} - u^{n-1},u^{n},v_{h}),
\\ \tau_{2}(u^{n};v_{h}) &= (u^{n}_{1,1}-u^{n-1}_{1,1},v_{2h,2}) + (u^{n}_{2,2}-u^{n-1}_{2,2},v_{1h,1}).
\end{align*}
\end{definition}
\begin{lemma}\label{consistency}
Provided $u$ satisfies the regularity assumptions \ref{error:regularity} - \ref{error:regularity2}, then $\forall \epsilon, r > 0$
\begin{align*}
\lvert \tau_{1}(u^{n};v_{h}) \rvert &\leq \frac{C\Delta t}{\epsilon} \Big(\| u_{tt}\|^{2}_{L^{2}(t^{n-1},t^{n};L^{2}(\Omega))} + \| \nabla u_{t}\|^{2}_{L^{2}(t^{n-1},t^{n};L^{2}(\Omega))}\Big) + \frac{\epsilon}{r} \| \nabla v_{h} \|^{2},
\\ \lvert \tau_{2}(u^{n};v_{h}) \rvert &\leq \frac{C\Delta t^{2}}{\epsilon} \|\nabla u_{t}\|^{2}_{L^{\infty}(t^{n-1},t^{n};L^{2}(\Omega))} + \frac{\epsilon}{r} \Big(\| v_{1h,1} \|^{2} + \| v_{2h,2} \|^{2}\Big).
\end{align*}
\end{lemma}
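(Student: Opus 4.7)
The plan is to treat $\tau_1$ and $\tau_2$ separately, and in both cases rely on the fundamental theorem of calculus together with the regularity assumptions (\ref{error:regularity})--(\ref{error:regularity2}). No delicate cancellation is needed; the bounds follow from Taylor-type integral remainders and Cauchy--Schwarz--Young.

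For $\tau_1$, I would split into the temporal truncation term and the convective term. For the first piece, I would write
\begin{equation*}
\frac{u^{n}-u^{n-1}}{\Delta t} - u_t^{n} \;=\; -\frac{1}{\Delta t}\int_{t^{n-1}}^{t^{n}} (s-t^{n-1})\, u_{tt}(s)\,ds,
\end{equation*}
and use Cauchy--Schwarz in time to get the pointwise $L^2(\Omega)$ bound $\|\cdot\|\leq C\sqrt{\Delta t}\,\|u_{tt}\|_{L^2(t^{n-1},t^n;L^2)}$, then pair with $v_h$ via Cauchy--Schwarz in space and Poincar\'e--Friedrichs to produce a $\|\nabla v_h\|$ factor. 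For the trilinear piece, since $u^n-u^{n-1}=\int_{t^{n-1}}^{t^n}u_t\,ds$, I get $\|\nabla(u^n-u^{n-1})\|\leq\sqrt{\Delta t}\,\|\nabla u_t\|_{L^2(t^{n-1},t^n;L^2)}$, and then Lemma \ref{l1} (the $C_1$ bound) combined with the assumed $L^\infty(0,t^\ast;H^1)$ regularity of $u$ yields $|b(u^n-u^{n-1},u^n,v_h)|\leq C\sqrt{\Delta t}\,\|\nabla u_t\|_{L^2(t^{n-1},t^n;L^2)}\,\|\nabla v_h\|$. Applying Young's inequality with parameter $\epsilon/r$ to both contributions and summing gives the stated estimate (absorbing $\|\nabla u\|_{L^\infty}$ into $C$).

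For $\tau_2$, the argument is simpler. For each component,
\begin{equation*}
u_{i,i}^{n} - u_{i,i}^{n-1} \;=\; \int_{t^{n-1}}^{t^{n}} (u_t)_{i,i}(s)\,ds,
\end{equation*}
so $\|u_{i,i}^n-u_{i,i}^{n-1}\|\leq \Delta t\,\|\nabla u_t\|_{L^\infty(t^{n-1},t^n;L^2)}$. Cauchy--Schwarz on each of the two inner products in the definition of $\tau_2$, followed by Young's inequality with parameter $\epsilon/r$ against $\|v_{1h,1}\|$ and $\|v_{2h,2}\|$ respectively, delivers the claimed bound after combining the constants into $C$.

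The only mildly subtle step is the trilinear estimate in $\tau_1$, where one must be careful to put the time-difference factor into $L^2$ in time (matching the regularity assumption on $u_t$) while keeping $u^n$ handled via the uniform-in-time $H^1$ bound; the other estimates are purely mechanical. I expect no real obstacle beyond bookkeeping of constants.
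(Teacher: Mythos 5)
Your argument is correct and follows essentially the same route as the paper, whose proof is only a one-line sketch citing exactly the tools you use: Taylor's theorem with integral remainder for the difference quotient, Lemma \ref{l1} for the trilinear term (with the $L^{\infty}(0,t^{\ast};H^{1})$ bound on $u$), and the Cauchy--Schwarz--Young and Poincar\'{e}--Friedrichs inequalities, with the analogous elementary estimate for $\tau_{2}$. Your write-up simply supplies the details the paper omits.
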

\begin{proof}
The first follows from Lemma \ref{l1}, the Cauchy-Schwarz-Young inequality, Poincar\'{e}-Friedrichs inequality, and Taylor's Theorem with integral remainder.  The second is similar.
\end{proof}
Once again, a key lemma on the effect of Step 2 on the evolution of the error is critical in the error analysis.
\begin{lemma}\label{l6}
Consider equation (\ref{step2}).  Let $e^{n}_{\hat{u}} = (u^{n} - \tilde{u}^{n}) - (\hat{u}^{n}_{h}- \tilde{u}^{n}) = \eta^{n} - \psi^{n}_{h}$.  Similarly, $e^{n}_{u} = (u^{n} - \tilde{u}^{n}) - (u^{n}_{h}- \tilde{u}^{n}) = \eta^{n} - \phi^{n}_{h}$.  The following inequality holds,
\begin{align*}
\| \psi^{n+1}_{h} \|^{2} \geq \|\phi^{n+1}_{h}\|^{2} + \|\phi^{n+1}_{h}-\psi^{n+1}_{h}\|^{2} + \beta \Big( \|\nabla \cdot \phi^{n+1}_{h}\|^{2} - \|\nabla \cdot \phi^{n}_{h}\|^{2} + \frac{1}{2}\|\nabla \cdot (\phi^{n+1}_{h}-\phi^{n}_{h})\|^{2} \Big)
\\ + \gamma \Delta t \| \nabla \cdot \phi^{n+1}_{h} \|^{2} - \beta \Delta t \| \nabla \cdot \phi^{n}_{h} \|^{2} - d\beta (1 + 2\Delta t) \| \nabla \eta_{t} \|^{2}_{L^{2}(t^{n},t^{n+1};L^{2}(\Omega))} - d \gamma \Delta t \| \nabla \eta^{n+1} \|^{2}.
\end{align*}
Moreover, considering equation (\ref{step2b}),
\begin{multline*}
	\|\psi^{n+1}_{h}\|^{2} \geq \|\phi^{n+1}_{h}\|^{2} + \|\phi^{n+1}_{h}-\psi^{n+1}_{h}\|^{2} + \gamma \Delta t \Big( \|\phi^{n+1}_{1h,1}\|^{2} - \|\phi^{n}_{1h,1}\|^{2} + \|\phi^{n+1}_{2h,2}\|^{2} - \|\phi^{n}_{2h,2}\|^{2}\Big)
	\\ + \frac{\gamma \Delta t}{4} \Big(\|\phi^{n+1}_{1h,1}+\phi^{n}_{1h,1}\|^{2} + \|\phi^{n+1}_{2h,2}+\phi^{n}_{2h,2}\|^{2}\Big) - \gamma(1 + 8\Delta t) \Big(\| \nabla \eta^{n+1}\|^{2} + \|\nabla \eta^{n}\|^{2}\Big)
\\ - C \gamma \Delta t^{2} \Big(\|\nabla u_{t} \|^{2}_{L^{2}(t^{n},t^{n+1};L^{2}(\Omega))} + \|\nabla u_{t} \|^{2}_{L^{\infty}(t^{n},t^{n+1};L^{2}(\Omega))}\Big) - 2 \gamma \Delta t^{2} \Big(\|\phi^{n}_{1h,1}\|^{2} + \|\phi^{n}_{2h,2}\|^{2}\Big)
\end{multline*}
\end{lemma}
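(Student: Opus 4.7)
The plan is to test Step~2 of each algorithm against $\phi^{n+1}_h$ and exploit the fact that, because $\nabla\cdot u=0$ pointwise for the exact solution, the Step~2 scheme is trivially consistent at $u=u^{n+1}$ (all grad-div contributions vanish). For Algorithm~1, subtracting (\ref{step2}) from its exact-solution counterpart and splitting $e^{n+1}_u = \eta^{n+1}-\phi^{n+1}_h$, $e^{n+1}_{\hat u}=\eta^{n+1}-\psi^{n+1}_h$, $e^n_u=\eta^n-\phi^n_h$, the $(\eta,v_h)$ pieces cancel on both sides, leaving an error equation in $\phi^{n+1}_h,\phi^n_h,\psi^{n+1}_h$ together with consistency residuals $\gamma\Delta t(\nabla\cdot\eta^{n+1},\nabla\cdot v_h)$ and $\beta\int_{t^n}^{t^{n+1}}(\nabla\cdot\eta_t,\nabla\cdot v_h)\,dt$ (using $\eta^{n+1}-\eta^n = \int_{t^n}^{t^{n+1}}\eta_t\,dt$). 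Testing with $v_h=\phi^{n+1}_h$ and applying the polarization identity to $(\psi^{n+1}_h,\phi^{n+1}_h)$ and to $\beta(\nabla\cdot\phi^n_h,\nabla\cdot\phi^{n+1}_h)$ mirrors the algebra of Lemma~\ref{l3} and yields the telescoping $\beta$ and $\gamma\Delta t$ terms of the claim, together with $\|\phi^{n+1}_h-\psi^{n+1}_h\|^2$.

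The $\gamma\Delta t$ consistency term is bounded routinely by Cauchy--Schwarz--Young together with $\|\nabla\cdot v\|^2\leq d\|\nabla v\|^2$, absorbing a $\gamma\Delta t\|\nabla\cdot\phi^{n+1}_h\|^2$ and leaving the stated $d\gamma\Delta t\|\nabla\eta^{n+1}\|^2$. The $\beta$ integral is the delicate piece: I split $\nabla\cdot\phi^{n+1}_h = \nabla\cdot(\phi^{n+1}_h-\phi^n_h) + \nabla\cdot\phi^n_h$, pair the first piece with half of the already-present $\beta\|\nabla\cdot(\phi^{n+1}_h-\phi^n_h)\|^2$ on the left (which is why only a $\frac{1}{2}$ coefficient survives on that term in the statement), and apply Young's inequality scaled by $\Delta t$ to the second piece, which produces exactly the $-\beta\Delta t\|\nabla\cdot\phi^n_h\|^2$ and $-d\beta(1+2\Delta t)\|\nabla\eta_t\|^2_{L^2(t^n,t^{n+1};L^2)}$ contributions. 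Isolating $\|\psi^{n+1}_h\|^2$ on the left completes the first claim.

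For Algorithm~2, a short calculation using $\nabla\cdot u=0$ in 2d gives the consistency identity $gd_{lag}(u^{n+1},v_h) = -\tau_2(u^{n+1};v_h)$, so the analogous error equation carries the additional terms $\gamma\Delta t\,\tau_2(u^{n+1};\phi^{n+1}_h)$ and $\gamma\Delta t\,gd_{lag}(\eta^{n+1},\phi^{n+1}_h)$ on the right. Testing with $\phi^{n+1}_h$ and invoking Lemma~\ref{l2} on $gd_{lag}(\phi^{n+1}_h,\phi^{n+1}_h)$ produces the telescoping differences of the claim together with the mixed-index cross terms $\|\phi^{n+1}_{1h,1}+\phi^n_{2h,2}\|^2 + \|\phi^n_{1h,1}+\phi^{n+1}_{2h,2}\|^2$. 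To reach the like-indexed form stated, I rewrite $\phi^{n+1}_{1h,1}+\phi^n_{2h,2} = (\phi^{n+1}_{1h,1}+\phi^n_{1h,1}) + (\phi^n_{2h,2}-\phi^n_{1h,1})$ and apply the Young-type lower bound $\|A+B\|^2 \geq \frac{1}{2}\|A\|^2 - \|B\|^2$, with the scaling tuned so that the residual appears as $-2\gamma\Delta t^2(\|\phi^n_{1h,1}\|^2+\|\phi^n_{2h,2}\|^2)$. Lemma~\ref{consistency} then bounds $\tau_2$ (yielding the $C\gamma\Delta t^2$ residuals in $\|\nabla u_t\|^2$), and termwise Cauchy--Schwarz--Young on the four pieces of $gd_{lag}(\eta^{n+1},\phi^{n+1}_h)$, which mixes $\eta^{n+1}$ with $\eta^n$, produces the $\gamma(1+8\Delta t)(\|\nabla\eta^{n+1}\|^2+\|\nabla\eta^n\|^2)$ factor with the $\phi^{n+1}_{ih,i}$ pieces absorbed into the telescope on the left.

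The main obstacle I anticipate is precisely this conversion from mixed-index to like-indexed cross terms in the Algorithm~2 case: hitting the exact constants (the $\frac{1}{4}$ in front of the surviving cross terms, the $1+8\Delta t$ factor, and especially the $\Delta t^2$ scaling of the residual in $\phi^n_{ih,i}$) requires a delicate balance of Young parameters that simultaneously accommodates the absorptions from $\tau_2$ and from $gd_{lag}(\eta^{n+1},\phi^{n+1}_h)$. Careful bookkeeping, rather than any conceptual difficulty, will be the bottleneck.
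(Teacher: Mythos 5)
Your treatment of the first inequality (Algorithm 1) is essentially the paper's own argument: subtract the exact-solution counterpart of (\ref{step2}) (consistent since $\nabla\cdot u=0$), test with $\phi^{n+1}_h$, polarize $(\psi^{n+1}_h,\phi^{n+1}_h)$ and the $\beta$ cross term, and split the $\beta$-consistency term into a $\phi^{n+1}_h-\phi^n_h$ piece (absorbed by half of $\beta\|\nabla\cdot(\phi^{n+1}_h-\phi^n_h)\|^2$) and a $\phi^n_h$ piece handled by a $\Delta t$-weighted Young inequality; this reproduces the stated constants.

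For the second inequality, however, your plan has a genuine gap at the cross-term step. You correctly observe that Lemma~\ref{l2} applied to $gd_{lag}(\phi^{n+1}_h,\phi^{n+1}_h)$ yields the mixed-index quantities $\|\phi^{n+1}_{1h,1}+\phi^{n}_{2h,2}\|^2+\|\phi^{n}_{1h,1}+\phi^{n+1}_{2h,2}\|^2$ (the index bookkeeping here is a real issue -- the paper's displayed identity in this proof records the like-indexed form while citing Lemma~\ref{l2}), but your proposed conversion cannot deliver the stated bound. The cross terms carry the coefficient $\gamma\Delta t$, and any exchange of the form $\|A+B\|^2\geq(1-\theta)\|A\|^2-(\theta^{-1}-1)\|B\|^2$ with $B=\phi^{n}_{2h,2}-\phi^{n}_{1h,1}$ faces a dichotomy: keeping an $O(1)$ fraction of the like-indexed term forces a penalty of size $O(\gamma\Delta t)\big(\|\phi^{n}_{1h,1}\|^2+\|\phi^{n}_{2h,2}\|^2\big)$, while making the penalty $O(\gamma\Delta t^2)$ forces the retained fraction down to $O(\Delta t)$. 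No tuning keeps $\tfrac{\gamma\Delta t}{4}$ of the cross term and leaves only $2\gamma\Delta t^2\big(\|\phi^{n}_{1h,1}\|^2+\|\phi^{n}_{2h,2}\|^2\big)$; and an $O(\gamma\Delta t)\|\phi^{n}_{ih,i}\|^2$ residual is fatal downstream, since the Gronwall step in Theorem~\ref{t4} only tolerates such terms with the extra factor of $\Delta t$. Similarly, the $\phi^{n+1}_{ih,i}$ contributions of $gd_{lag}(\eta^{n+1},\phi^{n+1}_h)$ cannot be ``absorbed into the telescope'': the left side contains only the differences $\|\phi^{n+1}_{ih,i}\|^2-\|\phi^{n}_{ih,i}\|^2$, not a free positive multiple of $\|\phi^{n+1}_{ih,i}\|^2$. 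The paper's actual device is different and is what produces the stated scalings: add and subtract $\gamma\Delta t\,gd_{lag}(\eta^{n+1},\phi^{n}_h)$ and $\gamma\Delta t\,\tau_2(u^{n+1},\phi^{n}_h)$, so each consistency term is tested once against $\phi^{n+1}_h+\phi^{n}_h$ -- whose divergence components are exactly the like-indexed sums $\phi^{n+1}_{ih,i}+\phi^{n}_{ih,i}$, allowing direct absorption into the cross terms with the $\tfrac{\gamma\Delta t}{4}$ and $\tfrac{\gamma\Delta t}{8}$ splittings -- and once against $\phi^{n}_h$ with a $\Delta t$-weighted Young inequality, which is the true source of both the $2\gamma\Delta t^2\big(\|\phi^{n}_{1h,1}\|^2+\|\phi^{n}_{2h,2}\|^2\big)$ residual and the $\gamma(1+8\Delta t)$ factor on the $\|\nabla\eta\|^2$ terms (with Lemma~\ref{consistency} supplying the $\tau_2$ bounds). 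Without this lag-the-test-function step, the $\Delta t^2$ scaling required by the statement is unreachable by constant bookkeeping alone.
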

\begin{proof}
The true solution satisfies for all $n = 0, 1, ..., N-1$:
\begin{align} \label{true:step2}
(u^{n+1},v_{h}) + \beta (\nabla \cdot (u^{n+1} - u^{n}), \nabla \cdot v_{h}) + \gamma \Delta t (\nabla \cdot u^{n+1}, \nabla \cdot v_{h}) = (u^{n+1},v_{h}) \; \; \forall v_{h} \in X_{h}.
\end{align}
Subtract (\ref{step2}) from (\ref{true:step2}), then the error equation is
\begin{align*}
(e^{n+1}_{u},v_{h}) + \beta (\nabla \cdot (e^{n+1}_{u} - e^{n}_{u}), \nabla \cdot v_{h}) + \gamma \Delta t (\nabla \cdot e^{n+1}_{u}, \nabla \cdot v_{h}) = (e^{n+1}_{\hat{u}},v_{h}) \; \; \forall v_{h} \in X_{h}.
\end{align*}
Let $v_{h} = \phi^{n+1}_{h}$.  Decompose the error terms and reorganize.  Then,
\begin{align}\label{keyeq}
\|\phi^{n+1}_{h}\|^{2} + \frac{\beta}{2} \Big( \|\nabla \cdot \phi^{n+1}_{h}\|^{2} - \|\nabla \cdot \phi^{n}_{h}\|^{2} + \|\nabla \cdot (\phi^{n+1}_{h}-\phi^{n}_{h})\|^{2} \Big) + \gamma \Delta t \| \nabla \cdot \phi^{n+1}_{h} \|^{2}
\\ = \beta (\nabla \cdot (\eta^{n+1}-\eta^{n}),\nabla \cdot \phi^{n+1}_{h}) + \gamma \Delta t (\nabla \cdot \eta^{n+1}, \nabla \cdot \phi^{n+1}_{h}) + (\psi^{n+1}_{h},\phi^{n+1}_{h}) \notag
\end{align}
Add and subtract $\beta (\nabla \cdot (\eta^{n+1}-\eta^{n}),\nabla \cdot \phi^{n}_{h})$ to the r.h.s. of equation (\ref{keyeq}).  Use the Cauchy-Schwarz-Young inequality on the following terms, as well as Taylor's Theorem with integral remainder on the first two terms,
\begin{align}
\beta (\nabla \cdot (\eta^{n+1}-\eta^{n}),\nabla \cdot (\phi^{n+1}_{h} - \phi^{n}_{h})) &\leq d \beta \Delta t \| \nabla \eta_{t} \|^{2}_{L^{2}(t^{n},t^{n+1};L^{2}(\Omega))} + \frac{\beta}{4} \| \nabla \cdot (\phi^{n+1}_{h} - \phi^{n}_{h}) \|^{2},
\\ \beta (\nabla \cdot (\eta^{n+1}-\eta^{n}),\nabla \cdot \phi^{n}_{h}) &\leq \frac{d\beta}{2} \| \nabla \eta_{t} \|^{2}_{L^{2}(t^{n},t^{n+1};L^{2}(\Omega))} + \frac{\beta \Delta t}{2} \| \nabla \cdot \phi^{n}_{h} \|^{2},
\\ \gamma \Delta t (\nabla \cdot \eta^{n+1}, \nabla \cdot \phi^{n+1}_{h}) &\leq \frac{d \gamma \Delta t}{2} \| \nabla \eta^{n+1} \|^{2} + \frac{\gamma \Delta t}{2} \| \nabla \cdot \phi^{n+1}_{h} \|^{2}.
\end{align}
Use the above inequalities and the polarization identity on $(\psi^{n+1}_{h},\phi^{n+1}_{h})$ in equation (\ref{keyeq}) and multiply by 2.  Then,
\begin{align*}
\|\phi^{n+1}_{h}\|^{2} + \|\phi^{n+1}_{h}-\psi^{n+1}_{h}\|^{2} + \beta \Big( \|\nabla \cdot \phi^{n+1}_{h}\|^{2} - \|\nabla \cdot \phi^{n}_{h}\|^{2} + \frac{1}{2}\|\nabla \cdot (\phi^{n+1}_{h}-\phi^{n}_{h})\|^{2} \Big) + \gamma \Delta t \| \nabla \cdot \phi^{n+1}_{h} \|^{2}
\\ \leq \| \psi^{n+1}_{h} \|^{2} + \beta \Delta t \| \nabla \cdot \phi^{n}_{h} \|^{2} + d\beta (1 + 2\Delta t) \| \nabla \eta_{t} \|^{2}_{L^{2}(t^{n},t^{n+1};L^{2}(\Omega))} + d \gamma \Delta t \| \nabla \eta^{n+1} \|^{2}.
\end{align*}
Reorganizing yields the first result.  For the second, notice that the true solution satisfies for all $n = 0, 1, ..., N-1$:
\begin{align} \label{true:step2b}
	(u^{n+1},v_{h}) + \gamma \Delta t \Big(gd_{lag}(u^{n+1},v_{h}) + \tau_{2}(u^{n+1},v_{h})\Big) = (u^{n+1},v_{h}) \; \; \forall v_{h} \in X_{h}.
\end{align}
Subtract (\ref{step2b}) from (\ref{true:step2b}), then the error equation is
\begin{align*}
	(e^{n+1}_{u},v_{h}) + \gamma \Delta t\Big(gd_{lag}(e^{n+1}_{u},v_{h}) + \tau_{2}(u^{n+1},v_{h})\Big) = (e^{n+1}_{\hat{u}},v_{h}) \; \; \forall v_{h} \in X_{h}.
\end{align*}
Let $v_{h} = \phi^{n+1}_{h}$.  Decompose the error terms, use Lemma \ref{l2} and reorganize.  Then,
\begin{multline}\label{keyeqb}
	\|\phi^{n+1}_{h}\|^{2} + \frac{\gamma \Delta t}{2} \Big( \|\phi^{n+1}_{1h,1}\|^{2} - \|\phi^{n}_{1h,1}\|^{2} + \|\phi^{n+1}_{2h,2}\|^{2} - \|\phi^{n}_{2h,2}\|^{2} + \|\phi^{n+1}_{1h,1}+\phi^{n}_{1h,1}\|^{2} + \|\phi^{n+1}_{2h,2}+\phi^{n}_{2h,2}\|^{2}\Big)
	\\ = \gamma \Delta t gd_{lag}(\eta^{n+1},\phi^{n+1}_{h}) + \gamma \Delta t \tau_{2}(u^{n+1},\phi^{n+1}_{h}) + (\psi^{n+1}_{h},\phi^{n+1}_{h}).
\end{multline}
Add and subtract $\gamma \Delta t gd_{lag}(\eta^{n+1},\phi^{n}_{h})$ and $\gamma \Delta t \tau_{2}(u^{n+1},\phi^{n}_{h})$ on the r.h.s.  Once again, use the Cauchy-Schwarz-Young inequality on the following terms as well as Lemma \ref{consistency} on the third and fourth,
\begin{align}
\gamma \Delta t gd_{lag}(\eta^{n+1},\phi^{n+1}_{h} + \phi^{n}_{h}) &\leq 8\gamma \Delta t \Big(\| \nabla \eta^{n+1}\|^{2} + \|\nabla \eta^{n})\|^{2}\Big)
\\ &+ \frac{\gamma \Delta t}{4} \Big(\| \phi^{n+1}_{1h,1} + \phi^{n}_{1h,1}\|^{2} + \| \phi^{n+1}_{2h,2} + \phi^{n}_{2h,2}\|^{2}\Big) \notag
\\-\gamma \Delta t gd_{lag}(\eta^{n+1},\phi^{n}_{h}) &\leq \gamma \Big(\| \nabla \eta^{n+1}\|^{2} + \|\nabla \eta^{n})\|^{2}\Big)
\\ &+ \frac{\gamma \Delta t^{2}}{2} \Big(\|\phi^{n}_{1h,1}\|^{2} + \|\phi^{n}_{2h,2}\|^{2}\Big), \notag
\\ \gamma \Delta t \tau_{2}(u^{n+1},\phi^{n+1}_{h}+\phi^{n}_{h}) &\leq {C \gamma \Delta t^{2}} \|\nabla u_{t} \|^{2}_{L^{2}(t^{n},t^{n+1};L^{2}(\Omega))}
\\ &+ \frac{\gamma \Delta t}{8} \Big(\| \phi^{n+1}_{1h,1} + \phi^{n}_{1h,1}\|^{2} + \| \phi^{n+1}_{2h,2} + \phi^{n}_{2h,2} \|^{2}\Big) \notag
\\ -\gamma \Delta t \tau_{2}(u^{n+1},\phi^{n}_{h}) &\leq \frac{C \gamma \Delta t^{2}}{2} \|\nabla u_{t} \|^{2}_{L^{\infty}(t^{n},t^{n+1};L^{2}(\Omega))}
\\ &+ \frac{\gamma \Delta t^{2}}{2} \Big(\|\phi^{n}_{1h,1}\|^{2} + \|\phi^{n}_{2h,2}\|^{2}\Big). \notag
\end{align}
Using the above inequalities and the polarization identity on $(\psi^{n+1}_{h},\phi^{n+1}_{h})$ in equation (\ref{keyeqb}) and multiplying by 2 yields, as claimed, that
\begin{multline*}
	\|\phi^{n+1}_{h}\|^{2} + \|\phi^{n+1}_{h}-\psi^{n+1}_{h}\|^{2} + \gamma \Delta t \Big( \|\phi^{n+1}_{1h,1}\|^{2} - \|\phi^{n}_{1h,1}\|^{2} + \|\phi^{n+1}_{2h,2}\|^{2} - \|\phi^{n}_{2h,2}\|^{2}\Big)
	\\ + \frac{\gamma \Delta t}{4} \Big(\|\phi^{n+1}_{1h,1}+\phi^{n}_{1h,1}\|^{2} + \|\phi^{n+1}_{2h,2}+\phi^{n}_{2h,2}\|^{2}\Big) \leq \|\psi^{n+1}_{h}\|^{2} + \gamma (1+8\Delta t) \Big(\| \nabla \eta^{n+1}\|^{2} + \|\nabla \eta^{n}\|^{2}\Big)
\\ + C \gamma \Delta t^{2} \Big(\|\nabla u_{t} \|^{2}_{L^{2}(t^{n},t^{n+1};L^{2}(\Omega))} + \|\nabla u_{t} \|^{2}_{L^{\infty}(t^{n},t^{n+1};L^{2}(\Omega))}\Big) + 2 \gamma \Delta t^{2} \Big(\|\phi^{n}_{1h,1}\|^{2} + \|\phi^{n}_{2h,2}\|^{2}\Big).
\end{multline*}
\end{proof}
We now prove convergence when $\beta>0$.
\begin{theorem} \label{t3}
For u and p satisfying (\ref{s1}), suppose that $u^{0}_{h} \in X_{h}$ is an approximation of $u^{0}$ to within the accuracy of the interpolant.  Suppose $\beta>0$, then there exists a constant $C>0$ such that the scheme (\ref{step1}) - (\ref{step2}) satisfies
\begin{align*}
\|e^{N}_{u}\|^{2} + \beta \|\nabla \cdot e^{N}_{u}\|^{2} + \sum_{n = 0}^{N-1} \Big(\|e^{n+1}_{\hat{u}} - e^{n}_{u}\|^{2} + \|e^{n+1}_{\hat{u}} - e^{n}_{u}\|^{2} + \beta \|\nabla \cdot (e^{n+1}_{u}-e^{n}_{u})\|^{2} \Big) + \gamma \vertiii{\nabla \cdot e_{u}}^{2}_{2,0}
\\ + \nu \vertiii{\nabla e_{\hat{u}}}^{2}_{2,0} \leq C \exp(C_{\dagger}t^{\ast}) \Big\{ \inf_{v_{h} \in X_{h}} \Big( \nu^{-1}\vertiii{(u - v_{h})_{t}}^{2}_{L^{2}(0,t^{\ast};H^{-1}(\Omega))}
\\ + (\nu^{-1} + \Delta t h^{-1}\nu^{-1} + d\gamma) \vertiii{\nabla(u - v_{h})}^{2}_{2,0} + d\beta(1+2\Delta t) \|\nabla(u - v_{h})_{t}\|^{2}_{L^{2}(0,t^{\ast};L^{2}(\Omega))}\Big)
\\ + d\nu^{-1}\inf_{q_{h} \in Q_{h}} \vertiii{ p - q_{h} }^{2}_{2,0} + \nu^{-1}\Delta t^{2} + \|e^{0}_{u}\|^{2} + \beta \|\nabla \cdot e^{0}_{u}\|^{2}\Big\}.
\end{align*}
\end{theorem}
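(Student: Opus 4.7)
The plan is to derive an error inequality for the Step~1 update, invoke Lemma~\ref{l6} to transfer bounds on the intermediate quantity $\psi^{n+1}_h$ into bounds on the Step~2 output $\phi^{n+1}_h$, and then close the argument by summation and the discrete Gronwall lemma.

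First I would subtract (\ref{step1}) from the weak form of the NSE evaluated at $t^{n+1}$, producing, after introducing the time-consistency error $\tau_1$, the error equation
\begin{align*}
\Big(\frac{e^{n+1}_{\hat u} - e^n_u}{\Delta t}, v_h\Big) + \nu(\nabla e^{n+1}_{\hat u}, \nabla v_h) - (e^{n+1}_p, \nabla\cdot v_h) + b(u^{n+1}, u^{n+1}, v_h) - b(u^n_h, \hat u^{n+1}_h, v_h) = \tau_1(u^{n+1}; v_h).
\end{align*}
I would then decompose errors through a suitable projection $\tilde u^n$ (e.g., a Stokes or Scott--Zhang projection into $V_h$), writing $e^n_u = \eta^n - \phi^n_h$ and $e^{n+1}_{\hat u} = \eta^{n+1} - \psi^{n+1}_h$, and test with $v_h = 2\Delta t\,\psi^{n+1}_h$. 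The polarization identity applied to the time-difference term produces $\|\psi^{n+1}_h\|^2 - \|\phi^n_h\|^2 + \|\psi^{n+1}_h - \phi^n_h\|^2$, mirroring the form of the stability proof of Theorem~\ref{t1}.

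Next I would decompose the nonlinear difference as
\begin{align*}
b(u^{n+1}, u^{n+1}, \cdot) - b(u^n_h, \hat u^{n+1}_h, \cdot) = b(u^{n+1}-u^n, u^{n+1}, \cdot) + b(e^n_u, u^{n+1}, \cdot) + b(u^n_h, \eta^{n+1} - \psi^{n+1}_h, \cdot),
\end{align*}
absorbing the first piece into $\tau_1$ and using skew-symmetry so that $b(u^n_h, \psi^{n+1}_h, \psi^{n+1}_h) = 0$. Each remaining contribution is bounded by Lemma~\ref{l1} with Young's inequality; the term involving $\phi^n_h$ is handled by the inverse inequality, trading an $h^{-1}$ factor for an $L^2$ norm, which accounts for the $\Delta t\,h^{-1}\nu^{-1}$ factor in the final bound. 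The pressure term is controlled by subtracting any $q_h \in Q_h$ and using $(\nabla \cdot \hat u^{n+1}_h, q_h)=0$ together with the pressure approximation property (\ref{a2}). Consistency is bounded by Lemma~\ref{consistency}, and the viscous term provides the $\nu\Delta t\|\nabla \psi^{n+1}_h\|^2$ absorption needed for all $\nu^{-1}$ products on the right.

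At this stage I would have a recursion whose left-hand side contains $\|\psi^{n+1}_h\|^2 - \|\phi^n_h\|^2 + \|\psi^{n+1}_h - \phi^n_h\|^2 + \nu\Delta t\|\nabla\psi^{n+1}_h\|^2$. Applying Lemma~\ref{l6} then replaces $\|\psi^{n+1}_h\|^2$ by $\|\phi^{n+1}_h\|^2 + \|\phi^{n+1}_h - \psi^{n+1}_h\|^2$ plus telescoping $\beta\|\nabla\cdot\phi_h\|^2$ and grad-div dissipation, at the cost of additional error terms proportional to $\|\nabla\eta_t\|^2$ and $\|\nabla\eta^{n+1}\|^2$. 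Summing from $n=0$ to $N-1$ collapses the telescoping terms and exposes exactly the left-hand side stated in the theorem. Finally, the discrete Gronwall Lemma~\ref{l4} absorbs the stray $\beta\Delta t\|\nabla\cdot\phi^n_h\|^2$ and any lower-order accumulation, yielding the exponential factor $\exp(C_\dagger t^\ast)$. The infima over $X_h$ and $Q_h$ follow from the approximation properties (\ref{a1})--(\ref{a2}) together with Lemma~\ref{l5}, and one chooses $u^0_h$ according to the hypothesis.

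The main obstacle is the lop-sided coupling created by Lemma~\ref{l6}: the right-hand side contains $\beta\Delta t\|\nabla\cdot \phi^n_h\|^2$ at the previous time level, which cannot be immediately absorbed and must instead be swallowed by Gronwall, so one must verify that the associated coefficient $d_n$ is summable uniformly in $\Delta t$. A secondary difficulty is that $\psi^{n+1}_h$ is not discretely divergence-free (the Step~2 update destroys this property), which is what forces the pressure approximation term, the interpolation divergence terms, and the $\Delta t\,h^{-1}\nu^{-1}$ factor to appear in the final bound; careful ordering of the Cauchy--Schwarz--Young splittings in the nonlinear term is needed to avoid picking up a factor that prevents closure.
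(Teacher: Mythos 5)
Your overall skeleton matches the paper's proof: subtract (\ref{step1}) from the consistent true-solution equation with consistency error $\tau_{1}$, test with $v_{h}=2\Delta t\,\psi^{n+1}_{h}$, use polarization, invoke Lemma \ref{l6} to convert $\|\psi^{n+1}_{h}\|^{2}$ into $\|\phi^{n+1}_{h}\|^{2}$ plus the telescoping $\beta$- and $\gamma$-terms, then sum, apply the discrete Gronwall Lemma \ref{l4} (which is indeed how the stray $\beta\Delta t\|\nabla\cdot\phi^{n}_{h}\|^{2}$ is absorbed), and finish with the approximation properties and Lemma \ref{l5}.

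There is, however, a concrete gap in your treatment of the convective term containing $\phi^{n}_{h}$, which is precisely where the hypothesis $\beta>0$ must enter. You propose to bound $-2\Delta t\,b(\phi^{n}_{h},u^{n+1},\psi^{n+1}_{h})$ with the inverse inequality, "trading an $h^{-1}$ factor for an $L^{2}$ norm," and you credit this with the $\Delta t\,h^{-1}\nu^{-1}$ factor in the final estimate. If you do that, Young's inequality leaves a term of the form $C h^{-1}\nu^{-1}\|\nabla u^{n+1}\|^{2}\,\Delta t\,\|\phi^{n}_{h}\|^{2}$, so the Gronwall coefficient $d_{n}$ carries $h^{-1}$ and the resulting factor is $\exp(Ch^{-1}\nu^{-1}\cdots)$, which is not uniform in $h$ and does not yield the stated bound (absent a timestep restriction like $\Delta t\lesssim h$, which the theorem does not assume). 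The paper instead uses the third inequality of Lemma \ref{l1}, $b(\phi^{n}_{h},u^{n+1},\psi^{n+1}_{h})\le C_{3}\big(\|\phi^{n}_{h}\|\|u^{n+1}\|_{2}+\|\nabla\cdot\phi^{n}_{h}\|\|\nabla u^{n+1}\|\big)\|\nabla\psi^{n+1}_{h}\|$, so the Gronwall quantities are $\|\phi^{n}_{h}\|^{2}$ and $\beta\|\nabla\cdot\phi^{n}_{h}\|^{2}$; the latter is available on the left only because $\beta>0$, which is why $C_{\dagger}$ contains $\beta^{-1}\vertiii{\nabla u}^{2}_{\infty,0}$ and why the $\beta=0$ case requires the extra assumption $u\in L^{\infty}(L^{\infty})$ in Theorem \ref{t4}. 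The $\Delta t\,h^{-1}\nu^{-1}$ factor actually originates in a different place: in the paper from $2\Delta t\,b(u^{n}_{h}-\hat u^{n+1}_{h},\eta^{n+1},\psi^{n+1}_{h})$ via the $C_{2}$ bound, the inverse inequality, and the bound on $\sum_{n}\|\hat u^{n+1}_{h}-u^{n}_{h}\|^{2}$ from Theorem \ref{t1}; in your decomposition the analogous piece is $b(u^{n}_{h},\eta^{n+1},\psi^{n+1}_{h})$, which you cannot estimate through $C_{1}\|\nabla u^{n}_{h}\|$ either (stability controls $\nabla\hat u_{h}$ and $\nabla\cdot u_{h}$, not $\nabla u^{n}_{h}$), so the inverse inequality belongs there, where the $h^{-1}$ multiplies an $\eta$-term rather than entering the Gronwall exponent. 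A minor correction: $\psi^{n+1}_{h}$ is discretely divergence free when the projection is taken in $V_{h}$ (it is built from the Step 1 output $\hat u^{n+1}_{h}$); it is $\phi^{n+1}_{h}$, from Step 2, that is not, though your handling of the pressure term still goes through.
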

\begin{proof}
Consider the scheme (\ref{step1}) - (\ref{step2}).  The true solution satisfies for all $n = 0, 1, ..., N-1$:
\begin{multline} \label{true:step1}
(\frac{u^{n+1} - u^{n}}{\Delta t},v_{h}) + b(u^{n},u^{n+1},v_{h}) + \nu (\nabla u^{n+1},\nabla v_{h}) - (p^{n+1},\nabla \cdot v_{h}) = (f^{n+1},v_{h})
\\ + \tau_{1}(u^{n+1};v_{h}) \; \; \forall v_{h} \in X_{h}.
\end{multline}
Subtract (\ref{step1}) from (\ref{true:step1}), then the error equation is
\begin{align*}
(\frac{e^{n+1}_{\hat{u}} - e^{n}_{u}}{\Delta t},v_{h}) + b(u^{n},u^{n+1},v_{h}) - b(u^{n}_{h},\hat{u}^{n+1},v_{h}) + \nu (\nabla e^{n+1}_{\hat{u}},\nabla v_{h}) - (e^{n+1}_{p},\nabla \cdot v_{h})
\\ = \tau_{1}(u^{n+1},v_{h}) \; \; \forall v_{h} \in X_{h}.
\end{align*}
Set $v_{h} = 2\Delta t\psi^{n+1}_{h} \in V_{h}$ and reorganize.  This yields
\begin{multline} \label{feu}
\|\psi^{n+1}_{h}\|^{2} - \|\phi^{n}_{h}\|^{2} + \|\psi^{n+1}_{h} - \phi^{n}_{h}\|^{2} + 2\nu \Delta t \| \nabla \psi^{n+1}_{h}\|^{2} = 2 (\eta^{n+1}-\eta^{n},\psi^{n+1}_{h}) + 2\nu \Delta t (\nabla \eta^{n+1},\nabla \psi^{n+1}_{h})
\\ + 2\Delta t b(u^{n},u^{n+1},\psi^{n+1}_{h}) - 2\Delta t b(u^{n}_{h},\hat{u}^{n+1},\psi^{n+1}_{h}) - 2\Delta t(p^{n+1}-q_{h},\nabla \cdot \psi^{n+1}_{h}) - 2\Delta t \tau_{1}(u^{n+1},\psi^{n+1}_{h}).
\end{multline}
Add and subtract $2\Delta t b(u^{n}_{h},u^{n+1},\psi^{n+1}_{h})$ and $2\Delta t b(\hat{u}^{n+1}_{h},\eta^{n+1},\psi^{n+1}_{h})$ to the r.h.s., use skew-symmetry, and reorganize.  Then,
\begin{multline} \label{feu1}
\|\psi^{n+1}_{h}\|^{2} - \|\phi^{n}_{h}\|^{2} + \|\psi^{n+1}_{h} - \phi^{n}_{h}\|^{2} + 2\nu \Delta t \| \nabla \psi^{n+1}_{h}\|^{2} = 2 (\eta^{n+1}-\eta^{n},\psi^{n+1}_{h}) + 2\nu \Delta t (\nabla \eta^{n+1},\nabla \psi^{n+1}_{h})
\\ + 2\Delta t b(\eta^{n},u^{n+1},\psi^{n+1}_{h}) - 2\Delta t b(\phi^{n}_{h},u^{n+1},\psi^{n+1}_{h}) + 2\Delta t b(\hat{u}^{n+1}_{h},\eta^{n+1},\psi^{n+1}_{h})
\\ + 2\Delta t b(u^{n}_{h}-\hat{u}^{n+1}_{h},\eta^{n+1},\psi^{n+1}_{h}) - 2\Delta t(p^{n+1}-q_{h},\nabla \cdot \psi^{n+1}_{h}) - 2\Delta t \tau_{1}(u^{n+1},\psi^{n+1}_{h}).
\end{multline}
The following estimates follow from application of the Cauchy-Schwarz-Young inequality,
\begin{align}
2(\eta^{n+1}-\eta^{n},\psi^{n+1}_{h}) &\leq \frac{4C_{r}}{\epsilon_1} \| \eta_{t} \|^{2}_{L^{2}(t^{n},t^{n+1};H^{-1}(\Omega))} + \frac{\epsilon_1 \Delta t}{r}\| \nabla \psi^{n+1}_{h} \|^{2}, \label{term1}
\\ 2\nu \Delta t (\nabla \eta^{n+1},\nabla \psi^{n+1}_{h}) &\leq \frac{4C_{r}\nu\Delta t}{\epsilon_2} \|\nabla \eta^{n+1}\|^{2} + \frac{\epsilon_2 \nu \Delta t}{r} \| \nabla \psi^{n+1}_{h} \|^{2}, \label{term2}
\\ - 2\Delta t(p^{n+1}_{h}-q_{h},\nabla \cdot \psi^{n+1}_{h}) &\leq \frac{4C_{r}d\Delta t}{\epsilon_3} \|p^{n+1}-q_{h}\|^{2} + \frac{\epsilon_3\Delta t}{r} \| \nabla \psi^{n+1}_{h} \|^{2}. \label{term3}
\end{align}
Applying Lemma \ref{l1} and the Cauchy-Schwarz-Young inequality yields,
\begin{align}
2\Delta t b(\eta^{n},u^{n+1},\psi^{n+1}_{h}) &\leq \frac{4C_{1}^{2}C_{r}\Delta t}{\epsilon_{4}} \|\nabla u^{n+1}\|^{2}\|\nabla \eta^{n}\|^{2} + \frac{\epsilon_{4}\Delta t}{r} \|\nabla \psi^{n+1}_{h}\|^{2}, \label{term4}
\\ -2\Delta t b(\phi^{n}_{h},u^{n+1},\psi^{n+1}_{h}) &\leq \frac{4C_{3}^{2}C_{r}\Delta t}{\epsilon_{5}}\|u^{n+1}\|^{2}_{2} \|\phi^{n}_{h}\|^{2} + \frac{\epsilon_{5}\Delta t}{r} \|\nabla \psi^{n+1}_{h}\|^{2} \label{term5}
\\ &+ \frac{4C_{3}^{2}C_{r}\Delta t}{\epsilon_{6}}\|\nabla u^{n+1}\|^{2} \|\nabla \cdot \phi^{n}_{h}\|^{2}+ \frac{\epsilon_{6}\Delta t}{r} \|\nabla \psi^{n+1}_{h}\|^{2},\notag
\\ 2\Delta t b(\hat{u}^{n+1}_{h},\eta^{n+1},\psi^{n+1}_{h}) &\leq 2C_{1}\Delta t \|\nabla \hat{u}^{n+1}_{h}\|\|\nabla \eta^{n+1}\|\|\nabla \psi^{n+1}_{h}\| \label{term6}
\\ &\leq \frac{4C^{2}_{1}C_{r}\Delta t}{\epsilon_{7}} \|\nabla \hat{u}^{n+1}_{h}\|^{2}\|\nabla \eta^{n+1}\|^{2} + \frac{\epsilon_{7}\Delta t}{r}\|\nabla \psi^{n+1}_{h}\|^{2}, \notag
\\ 2\Delta t b(u^{n}_{h}-\hat{u}^{n+1}_{h},\eta^{n+1},\psi^{n+1}_{h}) &\leq 2C_{2}\Delta t \sqrt{\|u^{n}_{h}-\hat{u}^{n+1}_{h}\|\|\nabla (u^{n}_{h}-\hat{u}^{n+1}_{h})\|}\|\nabla \eta^{n+1}\|\|\nabla \psi^{n+1}_{h}\| \label{term7}
\\ &\leq 2C_{2} C^{1/2}_{inv}\Delta th^{-1/2} \|u^{n}_{h}-\hat{u}^{n+1}_{h}\|\|\nabla \eta^{n+1}\|\|\nabla \psi^{n+1}_{h}\| \notag
\\ &\leq \frac{4C^{2}_{2}C_{r}C_{inv}\Delta t}{h\epsilon_{8}} \|u^{n}_{h}-\hat{u}^{n+1}_{h}\|^{2}\|\nabla \eta^{n+1}\|^{2} + \frac{\epsilon_{8}\Delta t}{r}\|\nabla \psi^{n+1}_{h}\|^{2}. \notag
\end{align}
Use Lemma \ref{l6} in equation (\ref{feu1}) and rearrange.  Then,
\begin{multline}
\|\phi^{n+1}_{h}\|^{2} - \|\phi^{n}_{h}\|^{2} + \|\psi^{n+1}_{h} - \phi^{n}_{h}\|^{2} + \|\phi^{n+1}_{h} - \phi^{n}_{h}\|^{2} + \beta \Big( \|\nabla \cdot \phi^{n+1}_{h}\|^{2} - \|\nabla \cdot \phi^{n}_{h}\|^{2}\Big) 
\\ + \frac{\beta}{2} \|\nabla \cdot (\phi^{n+1}_{h}-\phi^{n}_{h})\|^{2} + \gamma \Delta t \| \nabla \cdot \phi^{n+1}_{h} \|^{2} + 2\nu \Delta t \| \nabla \psi^{n+1}_{h}\|^{2} \leq 2 (\eta^{n+1}-\eta^{n},\psi^{n+1}_{h}) 
\\ + 2\nu \Delta t (\nabla \eta^{n+1},\nabla \psi^{n+1}_{h}) + 2\Delta t b(\eta^{n},u^{n+1},\psi^{n+1}_{h}) - 2\Delta t b(\phi^{n}_{h},u^{n+1},\psi^{n+1}_{h}) + 2\Delta t b(\hat{u}^{n+1}_{h},\eta^{n+1},\psi^{n+1}_{h})
\\ + 2\Delta t b(u^{n}_{h}-\hat{u}^{n+1}_{h},\eta^{n+1},\psi^{n+1}_{h}) - 2\Delta t \tau_{1}(u^{n+1},\psi^{n+1}_{h}) + \beta \Delta t \| \nabla \cdot \phi^{n}_{h} \|^{2}
\\ + d\beta(1+2\Delta t) \| \nabla \eta_{t} \|^{2}_{L^{2}(t^{n},t^{n+1};L^{2}(\Omega))} + d \gamma \Delta t \| \nabla \eta^{n+1} \|^{2}.
\end{multline}
Apply Lemma \ref{consistency}, let $r = 9$ and $\epsilon_1 = \nu \epsilon_2 = \epsilon_3 = \epsilon_4 = \epsilon_{5} = \epsilon_{6}  = \epsilon_{7} = \epsilon_{8} = \epsilon_{9} = \nu$.  Use the above estimates, and regroup:
\begin{multline}
\|\phi^{n+1}_{h}\|^{2} - \|\phi^{n}_{h}\|^{2} + \|\psi^{n+1}_{h} - \phi^{n}_{h}\|^{2} + \|\phi^{n+1}_{h} - \phi^{n}_{h}\|^{2} + \beta \Big( \|\nabla \cdot \phi^{n+1}_{h}\|^{2} - \|\nabla \cdot \phi^{n}_{h}\|^{2}\Big) 
\\ + \beta \|\nabla \cdot (\phi^{n+1}_{h}-\phi^{n}_{h})\|^{2} + \gamma \Delta t \| \nabla \cdot \phi^{n+1}_{h} \|^{2} + \nu \Delta t \| \nabla \psi^{n+1}_{h}\|^{2} \leq \frac{4C_{r}}{\nu} \| \eta_{t} \|^{2}_{L^{2}(t^{n},t^{n+1};H^{-1}(\Omega))} + \frac{4C_{r}\Delta t}{\nu} \|\nabla \eta^{n+1}\|^{2}
\\ + \frac{4C_{1}^{2}C_{r}\Delta t}{\nu} \|\nabla u^{n+1}\|^{2}\|\nabla \eta^{n}\|^{2} + \frac{4C_{3}^{2}C_{r}\Delta t}{\nu} \Big(\|u^{n+1}\|^{2}_{2} \|\phi^{n}_{h}\|^{2} + (1 + \beta^{-1}\|\nabla u^{n+1}\|^{2} )\beta\|\nabla \cdot \phi^{n}_{h}\|^{2} \Big)
\\ + \frac{4C_{1}^{2}C_{r}\Delta t}{\nu}\|\nabla \hat{u}^{n+1}_{h}\|^{2}\|\nabla \eta^{n+1} \|^{2} + \frac{4C_{2}^{2}C_{r}C_{inv}\Delta t}{h\nu}\|\hat{u}^{n+1}_{h}-u^{n}_{h}\|^{2}\|\nabla \eta^{n+1} \|^{2} + \frac{4C_{r}d\Delta t}{\nu} \|p^{n+1}-q_{h}\|^{2}
\\ + \frac{C\Delta t^{2}}{\nu} \Big(\| u_{tt}\|^{2}_{L^{2}(t^{n},t^{n+1};L^{2}(\Omega))} + \| \nabla u_{t}\|^{2}_{L^{2}(t^{n},t^{n+1};L^{2}(\Omega))}\Big)
\\ + d\beta(1+2\Delta t) \| \nabla \eta_{t} \|^{2}_{L^{2}(t^{n},t^{n+1};L^{2}(\Omega))} + d \gamma \Delta t \| \nabla \eta^{n+1} \|^{2}.
\end{multline}
Denote $C_{\dagger} = 4C_{3}^{2}C_{r}\nu^{-1}\max\{1,\vertiii{u}^{2}_{\infty,2},\beta^{-1}\vertiii{\nabla u}^{2}_{\infty,0}\}$. Sum from $n = 0$ to $n = N-1$, use Theorem \ref{t1}, take maximums over constants pertaining to Gronwall terms and remaining terms, use Lemma \ref{l4}, take infimums over $V_{h}$ and $Q_{h}$, and use Lemma \ref{l5}.  Renorm, then,
\begin{align*}
\|\phi^{N}_{h}\|^{2} + \beta \|\nabla \cdot \phi^{N}_{h}\|^{2} + \sum_{n = 0}^{N-1} \Big(\|\phi^{n+1}_{h} - \psi^{n+1}_{h}\|^{2} + \|\psi^{n+1}_{h} - \phi^{n}_{h}\|^{2} + \beta \|\nabla \cdot (\phi^{n+1}_{h}-\phi^{n}_{h})\|^{2} \Big)
\\ + \gamma \vertiii{\nabla \cdot \phi_{h}}^{2}_{2,0}  + \nu \vertiii{\nabla \psi_{h}}^{2}_{2,0} \leq C \exp(C_{\dagger}t^{\ast}) \Big\{ \inf_{v_{h} \in X_{h}} \Big( \nu^{-1}\vertiii{(u - v_{h})_{t}}^{2}_{L^{2}(0,t^{\ast};H^{-1}(\Omega))}
\\ + (\nu^{-1} + \Delta t h^{-1}\nu^{-1} + d\gamma) \vertiii{\nabla(u - v_{h})}^{2}_{2,0} + d\beta(1+2\Delta t) \|\nabla(u - v_{h})_{t}\|^{2}_{L^{2}(0,t^{\ast};L^{2}(\Omega))}\Big)
\\ + d\nu^{-1}\inf_{q_{h} \in Q_{h}} \vertiii{ p - q_{h} }^{2}_{2,0} + \nu^{-1}\Delta t^{2} + \|\phi^{0}_{h}\|^{2} + \beta \|\nabla \cdot \phi^{0}_{h}\|^{2}\Big\} .
\end{align*}
Using $\|\psi^{0}_{h}\| = \|\nabla \cdot \psi^{0}_{h}\| = 0$ and applying the triangle inequality yields the result.  
\end{proof}
\begin{corollary}
	Suppose the assumptions of Theorem \ref{t1} hold with $k=m=2$.  Further suppose that the finite element spaces ($X_{h}$,$Q_{h}$) are given by P2-P1 (Taylor-Hood), then the errors in velocity satisfy
\begin{align*}
\|e^{N}_{u}\|^{2} + \beta \|\nabla \cdot e^{N}_{u}\|^{2} + \sum_{n = 0}^{N-1} \Big(\|e^{n+1}_{\hat{u}} - e^{n}_{u}\|^{2} + \|e^{n+1}_{\hat{u}} - e^{n}_{u}\|^{2} + \beta \|\nabla \cdot (e^{n+1}_{u}-e^{n}_{u})\|^{2} \Big) + \gamma \vertiii{ \nabla \cdot e_{u} }^{2}_{2,0}
\\ + \nu \vertiii{\nabla e_{\hat{u}}}^{2}_{2,0} \leq C\exp(C_{\dagger}t^{\ast}) \Big(h^{6} + h^{4}+ h^{3}\Delta t + h^{4} + \Delta t^{2} + \|e^{0}_{u}\|^{2} + \beta \|\nabla \cdot e^{0}_{u}\|^{2}\Big).
\end{align*}
\end{corollary}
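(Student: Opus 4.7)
The plan is to specialize the abstract bound from Theorem \ref{t3} to the Taylor--Hood pair and substitute the approximation estimates (\ref{a1}) and (\ref{a2}) term by term. With $k=m=2$, we have the standard interpolation bounds
\begin{align*}
\inf_{v_h \in X_h}\{\|u-v_h\| + h\|\nabla(u-v_h)\|\} &\leq Ch^{3}|u|_{3},\\
\inf_{q_h \in Q_h}\|p-q_h\| &\leq Ch^{2}|p|_{2},
\end{align*}
and the analogous estimate for $u_t$ in place of $u$ under the regularity (\ref{error:regularity}). I would pick $v_h$ to be the nodal interpolant (or any quasi-interpolant realizing the above infima) and likewise for $q_h$, and then insert these into the right-hand side of Theorem \ref{t3}.

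The bookkeeping proceeds as follows. First, the term $\nu^{-1}\vertiii{(u-v_h)_t}^{2}_{L^{2}(0,t^{\ast};H^{-1}(\Omega))}$ contributes $O(h^{6})$, since the dual norm gains one full power of $h$ beyond the $L^2$ estimate (this is the standard Aubin--Nitsche-style upgrade for the interpolation error evaluated in $H^{-1}$). Second, the clustered factor $(\nu^{-1} + \Delta t h^{-1}\nu^{-1} + d\gamma)\vertiii{\nabla(u-v_h)}^{2}_{2,0}$ yields $O(h^{4})$ from the $\nu^{-1}$ and $d\gamma$ contributions and $O(\Delta t\, h^{3})$ from the inverse-estimate term $\Delta t h^{-1}\nu^{-1}\cdot h^{4}$. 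Third, $d\beta(1+2\Delta t)\|\nabla(u-v_h)_t\|^{2}_{L^2}$ again contributes $O(h^{4})$ in view of the regularity assumed on $u_t$. Fourth, the pressure infimum $d\nu^{-1}\inf_{q_h}\vertiii{p-q_h}^{2}_{2,0}$ gives $O(h^{4})$ from the $P_1$ estimate on $p$. Finally, the explicit $\nu^{-1}\Delta t^{2}$ term and the initial-error terms pass through unchanged.

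Summing these contributions and absorbing the viscosity-dependent constants into the $C\exp(C_{\dagger}t^{\ast})$ prefactor gives exactly
\[
C\exp(C_{\dagger}t^{\ast})\Big(h^{6} + h^{4} + h^{3}\Delta t + h^{4} + \Delta t^{2} + \|e^{0}_{u}\|^{2} + \beta\|\nabla\cdot e^{0}_{u}\|^{2}\Big),
\]
as claimed. The only step requiring any care, rather than purely mechanical substitution, is the $O(h^{3}\Delta t)$ term arising from the inverse-inequality factor $\Delta t h^{-1}$ that appeared in Theorem \ref{t3}'s bound on the trilinear form $b(u_{h}^{n}-\hat{u}_{h}^{n+1},\eta^{n+1},\psi_{h}^{n+1})$; one must confirm that this genuinely produces a suboptimal-looking but still first-order-in-$\Delta t$ contribution, since $h^{3}\Delta t \leq \tfrac{1}{2}(h^{6}+\Delta t^{2})$ is dominated by the other terms. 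This is the main bookkeeping obstacle, and no new analytical ingredient is required beyond what has already been established.
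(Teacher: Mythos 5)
Your proposal is correct and is essentially the paper's (implicit) argument: the corollary is obtained by specializing Theorem \ref{t3} to the Taylor--Hood pair and inserting the approximation properties (\ref{a1})--(\ref{a2}) with $k=m=2$, which is exactly the term-by-term bookkeeping you carry out. One small remark: the $O(h^{6})$ contribution from the $H^{-1}$ term requires no Aubin--Nitsche-type gain --- the elementary bound $\|(u-v_{h})_{t}\|_{-1}\leq C\|(u-v_{h})_{t}\|\leq Ch^{3}\lvert u_{t}\rvert_{3}$ already yields $h^{6}$ after squaring (an extra power of $h$ would give $h^{8}$, which is absorbed anyway), so that part of your justification is unnecessary rather than wrong.
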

\begin{corollary}
	Suppose the assumptions of Theorem \ref{t1} hold with $k=m=1$.  Further suppose that the finite element spaces ($X_{h}$,$Q_{h}$) are given by P1b-P1 (MINI element), then the errors in velocity satisfy
\begin{align*}
\|e^{N}_{u}\|^{2} + \beta \|\nabla \cdot e^{N}_{u}\|^{2} + \sum_{n = 0}^{N-1} \Big(\|e^{n+1}_{\hat{u}} - e^{n}_{u}\|^{2} + \|e^{n+1}_{\hat{u}} - e^{n}_{u}\|^{2} + \beta \|\nabla \cdot (e^{n+1}_{u}-e^{n}_{u})\|^{2} \Big) + \gamma \vertiii{ \nabla \cdot e_{u} }^{2}_{2,0}
\\ + \nu \vertiii{\nabla e_{\hat{u}}}^{2}_{2,0} \leq C\exp(C_{\dagger}t^{\ast}) \Big(h^{4} + h^{2} + h\Delta t + h^{2} + \Delta t^{2} + \|e^{0}_{u}\|^{2} + \beta \|\nabla \cdot e^{0}_{u}\|^{2}\Big).
\end{align*}
\end{corollary}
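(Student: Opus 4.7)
My plan is straightforward: the corollary is a direct specialization of Theorem \ref{t3}, obtained by substituting the MINI-element approximation properties into each infimum on the right-hand side. No new estimates are required; the work is purely bookkeeping, matching powers of $h$ and $\Delta t$ to the terms furnished by Theorem \ref{t3}.

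First I would invoke Theorem \ref{t3} to obtain the general bound whose right-hand side involves $\inf_{v_h \in X_h}$ and $\inf_{q_h \in Q_h}$ of the interpolation errors of $u$, $u_t$, and $p$. Next, I would record the MINI (P1b--P1) approximation properties, namely (\ref{a1})--(\ref{a2}) with $k = m = 1$: for $u \in H^2(\Omega)^d$ and $p \in H^1(\Omega)$ there exist $v_h \in X_h$ and $q_h \in Q_h$ with $\|u - v_h\| + h\|\nabla(u - v_h)\| \leq Ch^2|u|_2$ and $\|p - q_h\| \leq Ch|p|_1$; the same rates apply to $u_t$ in place of $u$ thanks to the regularity assumptions (\ref{error:regularity})--(\ref{error:regularity2}). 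In particular $\|(u - v_h)_t\|_{-1} \leq C\|(u - v_h)_t\| \leq Ch^2|u_t|_2$ and $\|\nabla(u - v_h)_t\| \leq Ch|u_t|_2$.

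Substituting term-by-term into the right-hand side of Theorem \ref{t3} yields the following: the $\nu^{-1}\vertiii{(u - v_h)_t}_{L^2(0,t^{\ast};H^{-1})}^{2}$ term is $O(h^4)$; the $(\nu^{-1} + d\gamma)\vertiii{\nabla(u - v_h)}_{2,0}^{2}$ term is $O(h^2)$; the $\Delta t h^{-1}\nu^{-1}\vertiii{\nabla(u - v_h)}_{2,0}^{2}$ term is $O(h\Delta t)$; the $d\beta(1 + 2\Delta t)\|\nabla(u - v_h)_t\|_{L^2(0,t^{\ast};L^2)}^{2}$ term is $O(h^2)$; the pressure infimum $d\nu^{-1}\inf_{q_h}\vertiii{p - q_h}_{2,0}^{2}$ is $O(h^2)$; and the $\nu^{-1}\Delta t^2$ consistency term persists. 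Absorbing $C_{\dagger}$-independent constants that depend on $d$, $\nu$, $\beta$, $\gamma$, $|u|_2$, $|u_t|_2$, and $|p|_1$ into a single $C$ produces the claimed bound.

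Because the argument is mechanical, there is no serious obstacle; the one point worth explicit verification is that the $\Delta t h^{-1}$ prefactor in Theorem \ref{t3} (which arose from the inverse-inequality step (\ref{term7})) multiplies the $O(h^2)$ interpolation error $\|\nabla(u - v_h)\|^{2}$ to yield the $O(h\Delta t)$ contribution rather than a loss in order. Since MINI gives only $\|\nabla(u - v_h)\| = O(h)$, this is exactly the breakeven case: any weaker velocity approximation would fail to absorb the $h^{-1}$ cleanly, which explains why no spurious $\Delta t/h$ contamination appears in the stated rate.
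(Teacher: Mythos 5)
Your proposal is correct and is exactly the paper's (implicit) argument: the corollary is stated as a direct specialization of Theorem \ref{t3}, obtained by inserting the approximation properties (\ref{a1})--(\ref{a2}) with $k=m=1$ for the MINI element and collecting the resulting powers of $h$ and $\Delta t$, including the borderline $\Delta t\,h^{-1}\cdot h^{2}=h\Delta t$ term you flag. No further comment is needed.
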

When $\beta = 0$ we have the following result.
\begin{theorem} \label{t4}
	For u and p satisfying (\ref{s1}), suppose that $u^{0}_{h} \in X_{h}$ is an approximation of $u^{0}$ to within the accuracy of the interpolant.  Suppose $\beta=0$ and $u \in L^{\infty}(0,t^{\ast};X \cap L^{\infty}(\Omega)^{d})$, then there exists a constant $C>0$ such that the scheme (\ref{step1}) - (\ref{step2}) satisfies
\begin{multline*}
\|e^{N}_{u}\|^{2} + \sum_{n = 0}^{N-1} \Big(\|e^{n+1}_{\hat{u}} - e^{n}_{u}\|^{2} + \|e^{n+1}_{\hat{u}} - e^{n}_{u}\|^{2}\Big) + \gamma \vertiii{\nabla \cdot e_{u}}^{2}_{2,0} + \nu \vertiii{\nabla e_{\hat{u}}}^{2}_{2,0} 
\\ \leq C \exp(C_{\#}t^{\ast}) \Big\{ \inf_{v_{h} \in X_{h}} \Big( \nu^{-1}\vertiii{(u - v_{h})_{t}}^{2}_{L^{2}(0,t^{\ast};H^{-1}(\Omega))} + (\nu^{-1} + \Delta t h^{-1}\nu^{-1} + d\gamma) \vertiii{\nabla(u - v_{h})}^{2}_{2,0}\Big)
	\\ + d\nu^{-1}\inf_{q_{h} \in Q_{h}} \vertiii{ p - q_{h} }^{2}_{2,0} + \nu^{-1}\Delta t^{2} + \|e^{0}_{u}\|^{2}\Big\}.
\end{multline*}
	Moreover, $\exists \; C > 0$ and $C_{\triangle} = \max\{2,C_{\#}\}$ such that the scheme (\ref{step1}) - (\ref{step2b}) satisfies
\begin{multline*}
\|e^{N}_{u}\|^{2} + \gamma \Delta t \Big( \|e^{N}_{1u,1}\|^{2} + \|e^{N}_{2u,2}\|^{2}\Big) + \sum_{n = 0}^{N-1} \Big(\|e^{n+1}_{\hat{u}} - e^{n}_{u}\|^{2} + \|e^{n+1}_{\hat{u}} - e^{n}_{u}\|^{2}\Big) + \nu \vertiii{\nabla e_{\hat{u}}}^{2}_{2,0} 
\\ + \frac{\gamma \Delta t}{4} \sum_{n = 0}^{N-1} \Big(\|e^{n+1}_{1u,1} + e^{n}_{1u,1}\|^{2} + \|e^{n+1}_{2u,2} + e^{n}_{2u,2}\|^{2} \Big) \leq C \exp(C_{\triangle}t^{\ast}) \Big\{ \inf_{v_{h} \in X_{h}} \Big( \nu^{-1}\vertiii{(u - v_{h})_{t}}^{2}_{L^{2}(0,t^{\ast};H^{-1}(\Omega))}
\\ + (\nu^{-1} + \Delta t h^{-1}\nu^{-1} + \gamma + \gamma \Delta t) \vertiii{\nabla(u - v_{h})}^{2}_{2,0}\Big) + d\nu^{-1}\inf_{q_{h} \in Q_{h}} \vertiii{ p - q_{h} }^{2}_{2,0} + \gamma\Delta t^{2} + \|e^{0}_{u}\|^{2}
\\ + \gamma \Delta t \Big( \|e^{0}_{1u,1}\|^{2} + \|e^{0}_{2u,2}\|^{2}\Big)\Big\}.
	\end{multline*}
\end{theorem}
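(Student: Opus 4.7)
The plan is to mirror the structure of the proof of Theorem~\ref{t3}, deviating at exactly two places that are forced by $\beta=0$ and by the switch to lagged grad-div in the second scheme. First I would reproduce the derivation of the analogue of equation~(\ref{feu1}) by subtracting the true-solution form of~(\ref{step1}) from~(\ref{step1}), testing with $v_h = 2\Delta t\,\psi^{n+1}_h \in V_h$, and using skew-symmetry together with the add/subtract trick $b(u^n_h,u^{n+1},\psi^{n+1}_h)$ and $b(\hat u^{n+1}_h,\eta^{n+1},\psi^{n+1}_h)$ on the nonlinearity. Bounds~(\ref{term1})--(\ref{term4}), (\ref{term6})--(\ref{term7}) and~(\ref{feu}) carry over verbatim; the stability Theorems~\ref{t1} and~\ref{t2} supply the uniform control on $\vertiii{\nabla \hat{u}_h}_{2,0}$ and $\|u^n_h - \hat{u}^{n+1}_h\|$ needed to close~(\ref{term6})--(\ref{term7}).

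The first genuinely new step is the estimate of $-2\Delta t\,b(\phi^n_h,u^{n+1},\psi^{n+1}_h)$, which is the content of~(\ref{term5}) in the proof of Theorem~\ref{t3}. There the $\|\nabla u^{n+1}\|^2\|\nabla\cdot\phi^n_h\|^2$ factor produced by the third inequality of Lemma~\ref{l1} was absorbed into the l.h.s.\ term $\beta\|\nabla\cdot\phi^n_h\|^2$. With $\beta=0$ that absorption is unavailable, so I would instead invoke the fourth inequality of Lemma~\ref{l1}, which is precisely why the new hypothesis $u \in L^{\infty}(0,t^{\ast};L^{\infty}(\Omega)^d)$ is imposed. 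Concretely,
\[
-2\Delta t\,b(\phi^n_h,u^{n+1},\psi^{n+1}_h) \le \frac{C\Delta t}{\nu}\bigl(\|u^{n+1}\|_2+\|u^{n+1}\|_\infty\bigr)^2\|\phi^n_h\|^2+\frac{\nu\Delta t}{r}\|\nabla\psi^{n+1}_h\|^2,
\]
so that only a pure $\|\phi^n_h\|^2$ factor remains for discrete Gronwall. This forces $C_\#$ to be the maximum of $1$, $\vertiii{u}^2_{\infty,2}$ and $\|u\|^2_{L^{\infty}(0,t^{\ast};L^{\infty})}$ up to fixed constants.

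For the first estimate, corresponding to scheme~(\ref{step1})--(\ref{step2}) with $\beta=0$, I would then apply the first inequality of Lemma~\ref{l6} in its $\beta=0$ specialization (every $\beta$-term collapses), sum from $n=0$ to $n=N-1$, apply the discrete Gronwall Lemma~\ref{l4} with exponent $C_\# t^{\ast}$, take infima over $V_h$ and $Q_h$, pass to an $X_h$ infimum via Lemma~\ref{l5}, and conclude with the triangle inequality. For Algorithm~2 I replace this step by the second inequality of Lemma~\ref{l6}. This inserts the directional telescoping $\gamma\Delta t(\|\phi^{n+1}_{1h,1}\|^2-\|\phi^n_{1h,1}\|^2+\|\phi^{n+1}_{2h,2}\|^2-\|\phi^n_{2h,2}\|^2)$ and the positive term $\tfrac{\gamma\Delta t}{4}(\|\phi^{n+1}_{1h,1}+\phi^n_{1h,1}\|^2+\|\phi^{n+1}_{2h,2}+\phi^n_{2h,2}\|^2)$ on the l.h.s., while generating on the r.h.s.\ the consistency debt $C\gamma\Delta t^2(\|\nabla u_t\|^2_{L^2}+\|\nabla u_t\|^2_{L^{\infty}})$ (which becomes the $\gamma\Delta t^2$ contribution in the final bound) and the residual $2\gamma\Delta t^2(\|\phi^n_{1h,1}\|^2+\|\phi^n_{2h,2}\|^2)$.

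The subtle residual in the previous paragraph is the only real bookkeeping obstacle for Algorithm~2: I would rewrite it as $2\Delta t \cdot \gamma\Delta t(\|\phi^n_{1h,1}\|^2+\|\phi^n_{2h,2}\|^2)$ and absorb it via discrete Gronwall with coefficient $d_n=2$ applied against the directional energy already sitting on the l.h.s., which is exactly what produces $C_\triangle=\max\{2,C_\#\}$. The main conceptual obstacle, however, is the nonlinear bound in the $\beta=0$ regime: without the $L^{\infty}$ hypothesis on $u$, no inequality in Lemma~\ref{l1} gives a bound for $b(\phi^n_h,u^{n+1},\psi^{n+1}_h)$ free of a $\|\nabla\cdot\phi^n_h\|^2$ factor lacking a coercive counterpart, and the Gronwall argument fails to close. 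Every remaining step is routine bookkeeping entirely parallel to Theorem~\ref{t3}.
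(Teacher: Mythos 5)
Your proposal is correct and follows essentially the same route as the paper: the paper's proof of Theorem~\ref{t4} also starts from (\ref{feu1}), replaces only the estimate (\ref{term5}) by the bound (\ref{term5b}) obtained from the fourth inequality of Lemma~\ref{l1} (which is exactly where the hypothesis $u \in L^{\infty}(0,t^{\ast};L^{\infty}(\Omega)^{d})$ enters, since with $\beta=0$ the term $\|\nabla\cdot\phi^{n}_{h}\|^{2}$ can no longer be absorbed), and then invokes the appropriate part of Lemma~\ref{l6} plus the same Gronwall/infimum/triangle-inequality machinery as Theorem~\ref{t3}. Your handling of the Algorithm~2 residual $2\gamma\Delta t^{2}(\|\phi^{n}_{1h,1}\|^{2}+\|\phi^{n}_{2h,2}\|^{2})$ via Gronwall with coefficient $2$, producing $C_{\triangle}=\max\{2,C_{\#}\}$, matches the intent of the paper's ``the second is nearly identical'' remark.
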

\begin{proof}
We prove only the first, the second is nearly identical.  We begin from equation (\ref{feu1}) and consider $2\Delta t b(\hat{u}^{n+1}_{h},\eta^{n+1},\psi^{n+1}_{h})$.  Applying Lemma 1 (inequality 4) and the Cauchy-Schwarz-Young inequality yields
\begin{align}\label{term5b}
-2\Delta t b(\phi^{n}_{h},u^{n+1},\psi^{n+1}_{h}) &\leq \frac{4C_{r}\Delta t}{\epsilon_{5}}\big(C_{3}^{2}\|u^{n+1}\|^{2}_{2} + \|u^{n+1}\|^{2}_{\infty}\big) \|\phi^{n}_{h}\|^{2} + \frac{\epsilon_{5}\Delta t}{r} \|\nabla \psi^{n+1}_{h}\|^{2}. 
\end{align}
Let $C_{\#} = 4C_{r}\nu^{-1}\max\{C_{3}^{2}\vertiii{u}^{2}_{\infty,2},\max_{0 \leq n \leq N-1} \|u^{n+1}\|^{2}_{\infty}\}$.  Use Lemma \ref{l6} in equation (\ref{feu1}), estimates (\ref{term1}) - (\ref{term4}), (\ref{term6}) - (\ref{term7}), and (\ref{term5b}), and Lemma \ref{consistency}.  Similar techniques used in Theorem \ref{t3} yield the result.
\end{proof}
 \begin{figure}
 	\centering 	\includegraphics[width=6in,height=\textheight, keepaspectratio]{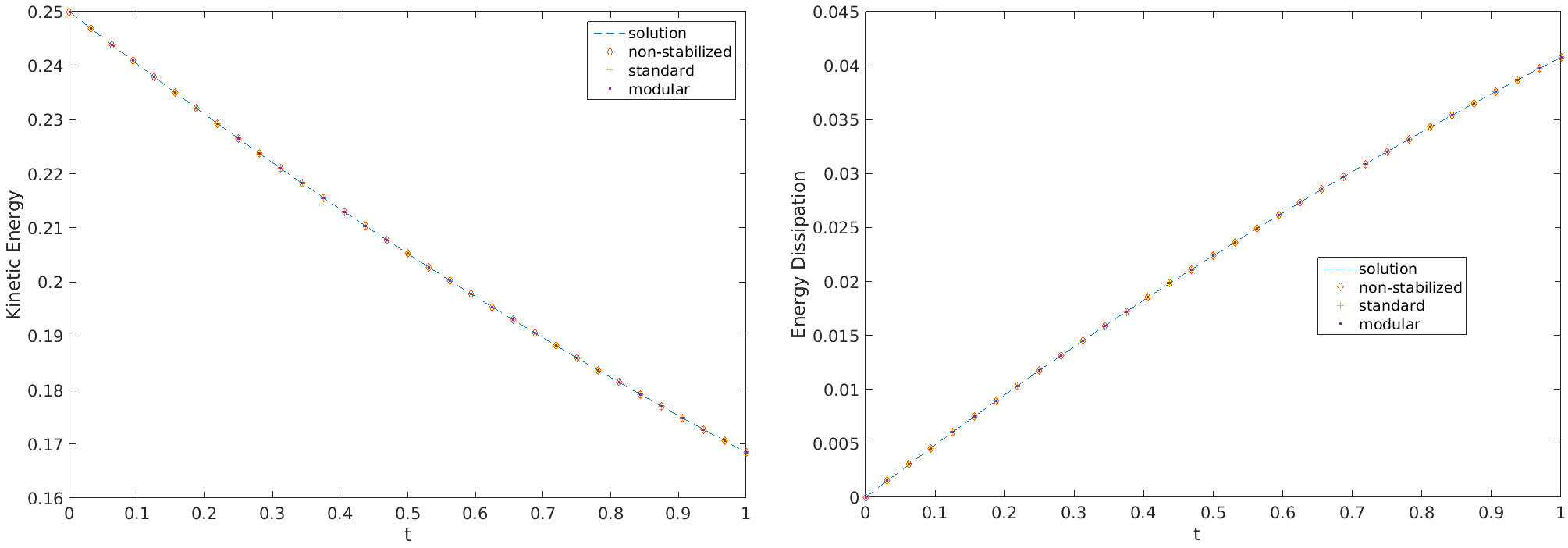}
 	\caption{Kinetic energy (left) and energy dissipation (right) vs time for modular algorithm with $\gamma = 1$ and $\beta = 0.2$.}\label{figure=edke}
 \end{figure}
\section{Numerical Experiments}
In this section, we illustrate the stability and convergence of the numerical schemes described by (\ref{step1}) - (\ref{step2}) and (\ref{step1}) - (\ref{step2b}).  The numerical experiments include a convergence experiment and timing test using a Taylor-Green benchmark problem.  Next, we consider the flow over a step and flow past a cylinder benchmark problems.  Taylor-Hood (P2-P1) and MINI (P1b-P1) elements are used to approximate the velocity and pressure distributions.  Finite element meshes are Delaunay triangulations generated by $m$ points on each side of the domain.  The software platform used for all tests is \textsc{FreeFem}$++$ \cite{Hecht}.
\subsection{Taylor-Green problem}
In this section, we illustrate convergence rates and speed of the modular algorithm.  We utilize the Taylor-Green vortex problem on the unit square $\Omega = [0,1]^2$ with exact solution
\begin{align*}
u_{1}(x,y,t) = -\cos(\omega \pi x) \sin(\omega \pi y)\exp(-2\omega^{2}\pi^{2}t/\tau),\\
u_{2}(x,y,t) = \sin(\omega \pi x) \cos(\omega \pi y)\exp(-2\omega^{2}\pi^{2}t/\tau),\\
p(x,y,t) = -\frac{1}{4}\big( \cos(2\omega \pi x) + \cos(2\omega \pi y)\big)\exp(-4\omega^{2}\pi^{2}t/\tau).
\end{align*}
We let $\tau = Re = 100$ and $\omega = 1$.  Moreover, $t^{\ast} = 1$ and $\Delta t = 1/m$.  The initial condition is given by the exact solution at $t=0$.  We set $\gamma = 1$ and $\beta = 0.2$ (Algorithm 1), $0$ (Algorithm 2) for convergence rates and vary $m$ between 32, 40, 48, 56, and 64.  Errors are computed for velocity and pressure in the appropriate norms.  The results are presented in Tables \ref{table=p1bp1} and \ref{table=p2p1} for both the MINI and Taylor-Hood elements.  We also compare the kinetic energy and energy dissipation of the true solution, non-stabilized and stabilized linearly implicit BDF1, and Algorithm 1, Figure \ref{figure=edke}.  Interestingly, the kinetic energy and energy dissipation are all in good agreement.  This indicates that, e.g., the extra dissipation introduced from the numerical methods is negligible for the given parameter values.

For the timing test, we fix $\Delta t = 1/m = 1/32$ and vary the grad-div parameters $0 \leq \beta \leq 8,000$ and $0\leq \gamma \leq 20,000$.  GMRES is used for Step 1 and UMFPACK for Step 2.  If GMRES fails at a single iterate, we denote the result with an 'F'.  We also calculate the percent increase in time to compute with the standard implementation relative to the modular implementation.  The results are presented in Table \ref{table=taylorgreen}.
Aside from the normal verification of convergence rates, we note the following.  In the first test with $\gamma = 1$ and $\beta = 0.2$, $\nabla \cdot u^{n}$ is consistently very small.  The second test is of the efficiency gain in the modular vs fully coupled methods.  Both methods can be implemented for increased efficiency and exact timings will be strongly dependent on many factors.  We solve each with standard GMRES.  Since Step 2 in the modular algorithm leads to an SPD system with fixed coefficient matrix, we use a sparse direct method for it.  Again, there are many ways to exploit the structure of Step 2.  Table \ref{table=taylorgreen} is summarized in Figure \ref{figure=speedtest} in the introduction.  Generally, for the fully coupled system, increasing $\gamma$ increases solution time; consistent with what has been reported by others \cite{Bowers,Glowinski}.  GMRES fails to converge within the set maximum iterations at quite moderate values of $\beta$ and $\gamma$.

\begin{table}
	\begin{tabular}{ c  c  c  c  c  c  c  c	c}
		\hline			
		$m$ & $\vertiii{ u_{h}- u }_{\infty,0}$ & Rate & $\vertiii{ \nabla \cdot(u_{h} - u) }_{\infty,0}$ & Rate & $\vertiii{ \nabla \cdot(u_{h} - u) }_{2,0}$ & Rate & $\vertiii{ p_{h} - p }_{2,0}$ & Rate \\
		\hline
		32 & 2.97E-03 & - & 1.13E-01 & - & 9.93E-02 & - & 4.55E-05 & -\\
		40 & 2.22E-03 & 1.31 & 9.08E-02 & 0.98 & 7.98E-02 & 0.98 & 2.00E-05 & 3.67\\
		48 & 1.67E-03 & 1.55 & 7.66E-02 & 0.94 & 6.73E-02 & 0.94 & 1.71E-05 & 0.88\\
		56 & 1.28E-03 & 1.74 & 6.55E-02 & 1.01 & 5.75E-02 & 1.01 & 1.31E-05 & 1.74\\
		64 & 1.02E-03 & 1.69 & 5.73E-02 & 1.01 & 5.03E-02 & 1.01 & 6.85E-06 & 4.82\\
		\hline  
	\end{tabular}
	
	\bigskip
	
	\begin{tabular}{ c  c  c  c  c  c  c  c	c}
			\hline			
			$m$ & $\vertiii{ u_{h}- u }_{\infty,0}$ & Rate & $\vertiii{ \nabla \cdot(u_{h} - u) }_{\infty,0}$ & Rate & $\vertiii{ \nabla \cdot(u_{h} - u) }_{2,0}$ & Rate & $\vertiii{ p_{h} - p }_{2,0}$ & Rate \\
			\hline
			32 & 3.46E-03 & - & 1.62E-01 & - & 1.23E-01 & - & 4.55E-05 & -\\
			40 & 2.77E-03 & 1.00 & 1.29E-01 & 1.02 & 9.87E-02 & 1.00 & 2.00E-05 & 3.67\\
			48 & 2.30E-03 & 1.01 & 1.10E-01 & 0.88 & 8.31E-02 & 0.94 & 1.71E-05 & 0.88\\
			56 & 2.01E-03 & 0.89 & 9.32E-02 & 1.07 & 7.11E-02 & 1.01 & 1.31E-05 & 1.74\\
			64 & 1.68E-03 & 1.33 & 8.10E-02 & 1.05 & 6.20E-02 & 1.02 & 6.85E-06 & 4.83\\
			\hline  
	\end{tabular}
		\caption{Errors and rates for velocity and pressure in corresponding norms of Algorithm 1 (top) and 2 (bottom) for the MINI element.}\label{table=p1bp1}
\end{table}

\begin{table}
	\begin{tabular}{ c  c  c  c  c  c  c  c	c}
		\hline			
		$m$ & $\vertiii{ u_{h}- u }_{\infty,0}$ & Rate & $\vertiii{ \nabla \cdot(u_{h} - u) }_{\infty,0}$ & Rate & $\vertiii{ \nabla \cdot(u_{h} - u) }_{2,0}$ & Rate & $\vertiii{ p_{h} - p }_{2,0}$ & Rate \\
		\hline
		32 & 4.57E-05 & - & 7.49E-04 & - & 6.62E-04 & - & 4.55E-05 & -\\
		40 & 2.97E-05 & 1.93 & 5.04E-04 & 1.78 & 4.44E-04 & 1.79 & 2.00E-05 & 3.67\\
		48 & 2.23E-05 & 1.58 & 3.70E-04 & 1.70 & 3.26E-04 & 1.70 & 1.71E-05 & 0.89\\
		56 & 1.83E-05 & 1.25 & 2.48E-04 & 2.60 & 2.18E-04 & 2.60 & 1.31E-05 & 1.72\\
		64 & 1.54E-05 & 1.32 & 2.01E-04 & 1.56 & 1.77E-04 & 1.57 & 6.86E-06 & 4.83\\
		\hline  
	\end{tabular}
	
	\bigskip
	
	\begin{tabular}{ c  c  c  c  c  c  c  c	c}
		\hline			
		$m$ & $\vertiii{ u_{h}- u }_{\infty,0}$ & Rate & $\vertiii{ \nabla \cdot(u_{h} - u) }_{\infty,0}$ & Rate & $\vertiii{ \nabla \cdot(u_{h} - u) }_{2,0}$ & Rate & $\vertiii{ p_{h} - p }_{2,0}$ & Rate \\
		\hline
		32 & 3.02E-03 & - & 3.19E-03 & - & 2.94E-03 & - & 4.55E-05 & -\\
		40 & 2.47E-03 & 0.91 & 2.30E-03 & 1.46 & 2.12E-03 & 1.47 & 2.00E-05 & 3.67\\
		48 & 2.07E-03 & 0.96 & 1.75E-03 & 1.49 & 1.61E-03 & 1.50 & 1.71E-05 & 0.89\\
		56 & 1.78E-03 & 0.99 & 1.38E-03 & 1.57 & 1.26E-03 & 1.57 & 1.31E-05 & 1.72\\
		64 & 1.56E-03 & 1.01 & 1.12E-03 & 1.54 & 1.03E-03 & 1.54 & 6.86E-06 & 4.83\\
		\hline  
	\end{tabular}
	\caption{Errors and rates for velocity and pressure in corresponding norms of Algorithm 1 (top) and 2 (bottom) for the Taylor-Hood element.}\label{table=p2p1}
\end{table}

\begin{adjustbox}{max width=\textwidth}
	\begin{tabular}{ c  c  c  c  c  c  c  c	c}
		\hline			
		\multicolumn{2}{c} {Parameters} & \multicolumn{3}{c} {P1b-P1} & \multicolumn{3}{c} {P2-P1} \\
		$\beta$ & $\gamma$ & Standard time (s) & Modular time (s) & \% increase & Standard time (s) & Modular time (s) & \% increase \\
		\hline
		0 & 0 & 8.13 & 8.43 & -3.47 & 11.31 & 11.64 & -2.81 \\
		0 & 0.2 & 12.97 & 9.07 & 42.99 & 16.99 & 12.18 & 39.40\\
		0 & 2 & 25.62 & 8.85 & 189.62 & 58.42 & 12.49 & 367.67\\
		0 & 20 & 150.20 & 9.26 & 1522.04 & F & 12.49 & -\\
		0 & 200 & 150.00 & 9.18 & 1533.32 & F & 12.90 & -\\
		0 & 2,000 & 149.84 & 9.24 & 1522.57 & F & 12.71 & -\\
		0 & 20,000 & 149.95 & 9.11 & 1546.71 & F & 12.68 & -\\
		0.01 & 0.2 & 13.58 & 8.48 & 60.19 & 21.00 & 12.11 & 73.51\\
		0.02 & 0.2 & 18.30 & 8.79 & 108.28 & 25.72 & 12.47 & 106.31\\
		0.04 & 0.2 & 14.14 & 8.86 & 59.59 & 47.63 & 12.28 & 287.73\\
		0.08 & 0.2 & F & 8.80 & - & 39.83 & 12.27 & 224.52\\
		0.8 & 0.2 & F & 8.47 & - & F & 12.38 & -\\
		8 & 0.2 & F & 8.79 & - & F & 12.00 & -\\
		80 & 0.2 & F & 8.68 & - & F & 12.35 & -\\
		800 & 0.2 & F & 8.75 & - & F & 12.43 & -\\
		8,000 & 0.2 & F & 8.59 & - & F & 12.61 & -\\
		\hline  
	\end{tabular}
\end{adjustbox}
\captionof{table}{Time to solve the Taylor-Green problem using the MINI (left) and Taylor-Hood (right) elements.  These results are summarized in Figure \ref{figure=speedtest} of the introduction.} \label{table=taylorgreen}
\subsection{2D Channel Flow Over a Step}\setlength\parindent{24pt}
We next study the effects of grad-div stabilization on the quality of the solution and compare fully coupled and modular implementations.  The benchmark problem is 2D channel flow over a step \cite{Fragos,John3}.  A fluid with $\nu = 1/600$ flows within a channel.  The channel dimensions are $40 \times 10$ with a $1 \times 1$ step placed five units into the channel from the l.h.s.  No body forces are imposed, e.g. $f = 0$.  No-slip boundary conditions are imposed at all walls.  Inlet and outlet profiles are prescribed via
\begin{align*}
u(0,y,t)=u(40,y,t)&= y (10 - y) /25, \\
v(0,y,t)=v(40,y,t) &= 0.
\end{align*}
This problem exhibits a smooth velocity distribution with eddy formation and detachment occurring behind the step.
\begin{figure}
	\centering
	\includegraphics[height=3.5in, width=\textwidth]{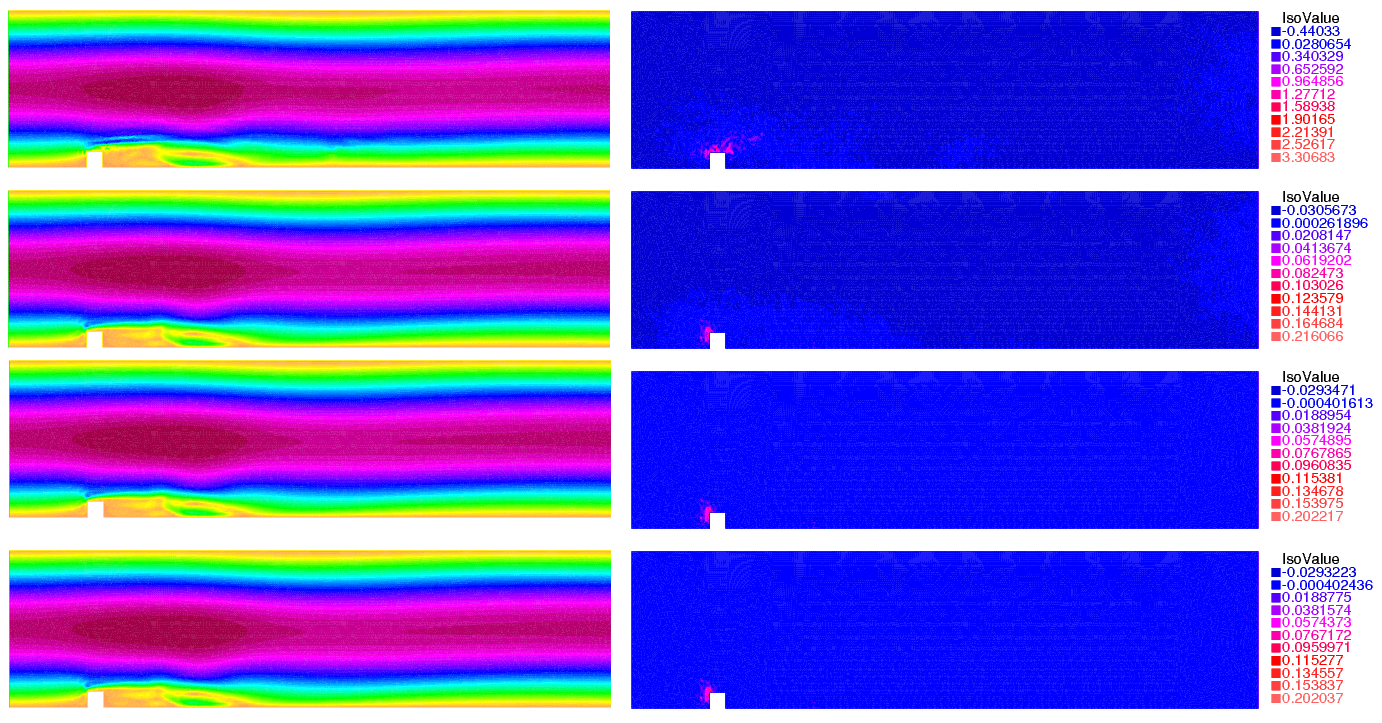}
	\caption{Speed and div contour for flow past a step at times $t^{\ast} = 40$ with $\gamma = 1$ and $\beta = 0$ for non-stabilized, standard, modular, and modular + sparse, respectively.}\label{figure=step}
\end{figure}

\indent Taylor-Hood elements (P2-P1) are used for velocity and pressure on a mesh with 31,233 total degrees of freedom.  The timestep $\Delta t$ is set to 0.01.  We compare numerical approximations using standard implementations of grad-div stabilization with the modular implementations.  The chosen parameters are $\gamma = 0$, $0.1$ \cite{Bowers}, $0.2$ \cite{Linke}, and $1$ \cite{Olshanskii} and $\beta = 0,\; 01,\; 0.2,\; 1$.  When $\gamma = \beta = 0$, this is simply linearly implicit BDF1; we denote this as the non-stabilized solution.  In Figure \ref{figure=step}, we present plots of speed and divergence contours at the final time $t^{\ast} = 40$ with $\beta = 0$ and $\gamma = 1$.  We see that the stabilized solutions are in good agreement with one another.  Moreover, there is a significant reduction ($\simeq 16$) in the divergence error over the non-stabilized solution.  

We also present plots of $\|\nabla \cdot u \|(t)$ vs time for Algorithm 1 in Figure \ref{figure=divcomparison}.
\begin{figure}
	\centering
	\includegraphics[height=2.30in, keepaspectratio]{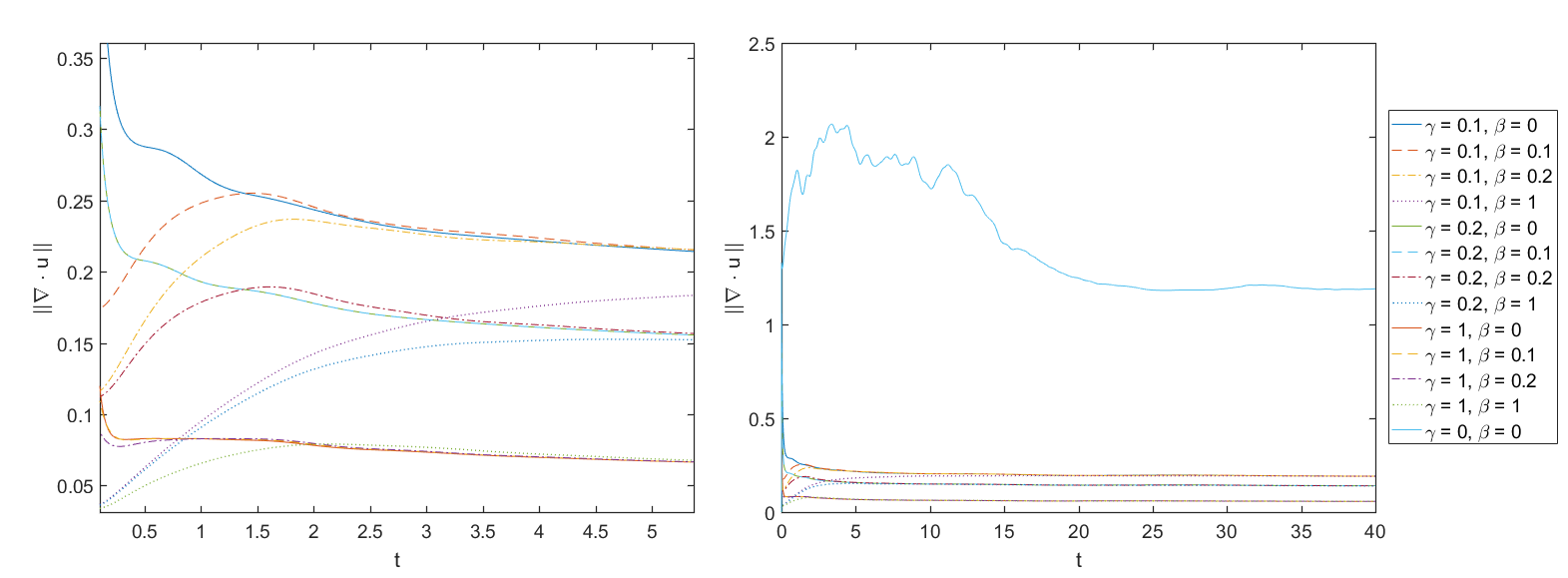}
	\caption{Comparison of $\|\nabla \cdot u\|(t)$ vs time for the modular implementation (right) and zoomed in (left).}\label{figure=divcomparison}
\end{figure}
Comparing the non-stabilized solution with stabilized solutions, we see an improvement in the divergence error.  In particular, when $\gamma = 1$ the divergence error is significantly reduced.  Moreover, when $\beta = 1$, the divergence error is further reduced.  Interestingly, we see that the value of $\beta$ has a strong effect early in the simulation while $\gamma$ determines the long-term behavior.
\subsection{2D Channel Flow Past a Cylinder}\setlength\parindent{24pt}
As a third validation experiment, we compare standard (full coupling) and modular implementations for channel flow past a cylinder \cite{Schafer}.  Once again, we find that modular implementations produce results consistent with full coupling.  A fluid with $\nu = 0.001$ and $\rho = 1$ flows within a channel.  The channel has dimensions $2.2 \times 0.41$.  A cylinder of diameter $0.1$ is centered at (0.2,0.2) within the channel.  No body forces are assumed present.  No-slip boundary conditions are imposed at all walls.  Inlet and outlet profiles are prescribed via
\begin{align*}
u(0,y,t)=u(2.2,y,t) &=\frac{6y(0.41-y)}{0.41^2} \sin(\pi t/8),\\
v(0,y,t)=v(2.2,y,t) &= 0.
\end{align*}
The solution is interesting, involving the development of two vortices and eventually a vortex street which persists throughout the simulation time.

\indent Taylor-Hood elements (P2-P1) are used on a mesh with 64,554 total degrees of freedom.  The timestep is $\Delta t = 0.001$.  The chosen parameters are $\gamma = 5\nu$ and $\beta = 0$.  Drag $c_{d}(t)$ and lift $c_{l}(t)$ coefficients are calculated as well as the pressure difference between the front and back of the cylinder $\Delta p (t) = p (0.15,0.2,t)- p(0.25,0.2,t)$.  We compare the computed values with the accepted values seen in Schafer and Turek \cite{Schafer}; in particular, $c^{max}_{d} \in [2.93,2.97]$, $c^{max}_{l} \in [0.47,0.49]$, and $\Delta p (8) \in [-0.115,-0.105]$.  We also compute $\| \nabla \cdot u \|(t)$.  These are presented in Table \ref{table=cylinder}.  

\indent Lastly, we present plots of velocity and speed contours for the modular algorithm at times $t^{\ast} = 4,\; 6,\; 7,\; 8$ in Figure \ref{figure=cylinder}; these are consistent with what is seen in the literature \cite{Bowers,John4,Linke}.  In Table \ref{table=cylinder}, we see that all algorithms produce consistent values.  We note that pressure drop across the cylinder for the non-stabilized and sparse algorithms are slightly off.  Moreover, the non-stabilized solution consistently produces larger divergence error, illustrated by the last column.
\vspace{5mm}

\begin{figure}
	\centering
	\includegraphics[height=7.0in, keepaspectratio]{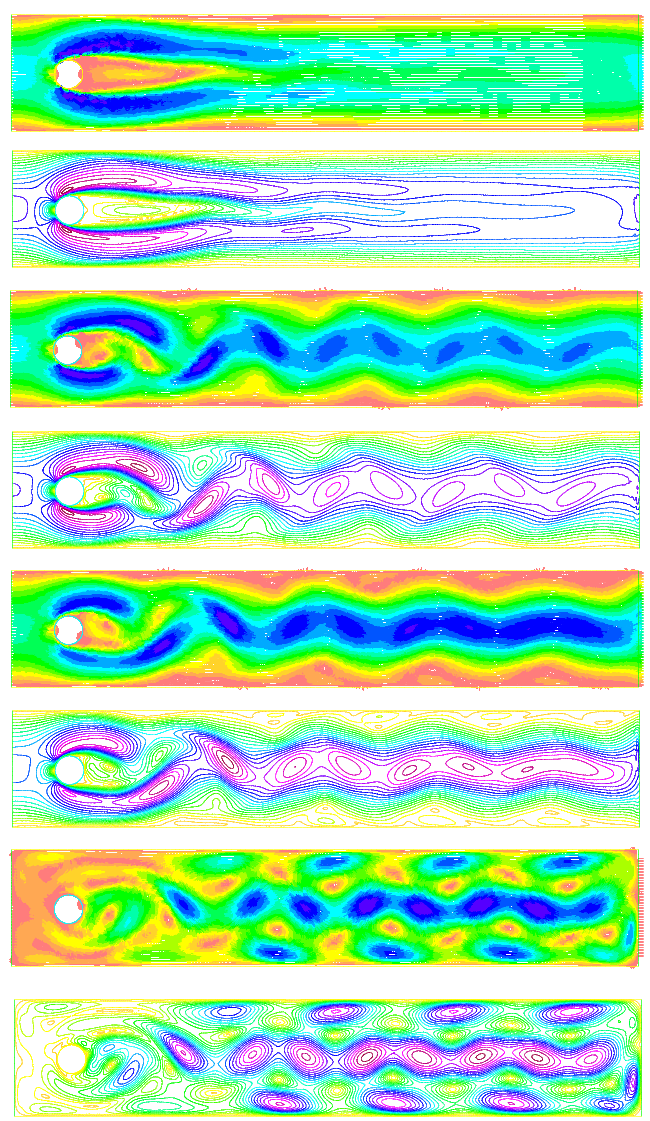}
	\caption{Velocity and speed contour for flow past a cylinder at times $t^{\ast}$ = 4, 6, 7, and 8.}\label{figure=cylinder}
\end{figure}
{
\centering
\begin{adjustbox}{max width=\textwidth}
	\begin{tabular}{ c	c	c	c	c}
		\hline			
		Method & $c^{max}_{d}$ & $c^{max}_{l}$ & $\Delta p (8)$ & $\| \nabla \cdot u \|(8)$\\
		\hline
		non-Stabilized & 2.950 & 0.471 & \textbf{-0.1046} & \textbf{0.083} \\
		Standard & 2.950 & 0.478 & -0.1056 & 0.030 \\
		Modular & 2.950 & 0.478 & -0.1055 & 0.029 \\
		Modular + Sparse & 2.950 & 0.486 & \textbf{-0.1037} & 0.029 \\
		\hline  
	\end{tabular}
\end{adjustbox}
\captionof{table}{Maximum lift, drag coefficients and pressure drop across cylinder for flow past a cylinder.}\label{table=cylinder}
}

\section{Conclusion}
We presented two modular implementations of grad-div stabilization for fluid flow problems.  The presented algorithms presume a standard NSE code and add a minimally intrusive module that has the effect of adding dissipative or dispersive grad-div stabilizations.  Both modular algorithms are proven to be stable and optimally convergent. Moreover, numerical experiments were performed to illustrate the above properties and the value of the modular implementations.  In particular, it is shown that in solution quality and imposition of mass conservation, the modular implementations perform as well as standard implementations.  Further, in our tests, the modular methods are unaffected by variations of grad-div stabilization parameters whereas the cost of standard implementations grow rapidly as the parameters grow.


\end{document}